\tikzstyle{every picture}=[line cap=round,line join=round,line width=.7pt,minimum size=3pt,every label/.append style={font=\small}, label distance=2pt]
\tikzstyle{EDR}=[draw=red,line width=1pt,preaction={clip, postaction={pattern=north west lines, pattern color=red}}]
\tikzstyle{EDB}=[draw=red,line width=1pt,preaction={clip, postaction={pattern=north west lines, pattern color=blue}}]
\tikzstyle{EDG}=[draw=red,line width=1pt,preaction={clip, postaction={pattern=north west lines, pattern color=green}}]
\newtheorem{theorem}{Theorem}
\newtheorem*{theoreme}{Main Theorem}
\newtheorem{lemma}{Lemma}
\theoremstyle{definition}
\newtheorem{example}{Example}
\newtheorem{remark}{Remark}
\numberwithin{equation}{section}
\renewenvironment{proof}{\smallskip\noindent\emph{\textbf{Proof.}}%
  \hspace{1pt}}{\hspace{-5pt}{\nobreak\quad\nobreak\hfill\nobreak%
    $\square$\vspace{2pt}\par}\smallskip\goodbreak}
\newenvironment{proofof}[1]{\smallskip\noindent{\textbf{Proof~of~#1.}}%
  \hspace{1pt}}{\hspace{-5pt}{\nobreak\quad\nobreak\hfill\nobreak%
    $\square$\vspace{2pt}\par}\smallskip\goodbreak}
\newcommand{\bbar}[1]{\bar{\bar{#1}}}
\newcommand{\modulo}[1]{{\left|#1\right|}}
\newcommand{\R}{\mathbb{R}}
\newcommand{\N}{\mathbb{N}}
\renewcommand{\epsilon}{\varepsilon}
\newcommand{\e}{\varepsilon}
\renewcommand{\phi}{\varphi}
\renewcommand{\theta}{\vartheta}
\renewcommand{\div}{\mathinner{\mathop{{\rm div}}}}
\newcommand\red[1]{{\color{red}#1}}
\newcommand{\eps}{\varepsilon}
\newcommand{\A}{\mathbb{A}}
\newcommand{\be}{\begin{equation}}
\newcommand{\ee}{\end{equation}}
\definecolor{ffqqqq}{rgb}{1.,0.,0.}
\definecolor{uuuuuu}{rgb}{0.26666666666666666,0.26666666666666666,0.26666666666666666}
\let\@fnsymbol\@arabic
\title{Blow up for nonlinear wave-type equations with perturbed derivatives}
\author{F. A. Chiarello} 
\address[Felisia Angela Chiarello]{\newline DISIM, University of L'Aquila, Via Vetoio Coppito ED. 1, 67100, L'Aquila, Italy. }
\email[]{felisiaangela.chiarello@univaq.it}
\author{G. Girardi}
\address[Giovanni Girardi]{\newline
Department of Industrial Engineering and Mathematical Sciences, Polytechnic University of Marche, Via Brecce Bianche 12, 60128, Ancona, Italy.}
\email[]{g.girardi@univpm.it}
\author{S. Lucente}
\address[Sandra Lucente]{\newline
Dipartimento Interateneo di Fisica, University of Bari, Via Orabona n. 4, 70126, Bari, Italy.}
\email[]{sandra.lucente@uniba.it}
\subjclass[2010]{Primary 35B33;
Secondary 35L70.}
\keywords{Scale invariant damped wave, slow decay, lifespan}
\date{}
\begin{document}
\baselineskip16pt

\maketitle

\begin{abstract}
We investigate semilinear wave-type equations that can be recast as wave equations with derivatives perturbed by zero-order terms. This framework covers several well-studied cases, including the scale-invariant wave equation. In this setting, we refine existing blow-up results for radial initial data with suitable decay, and identify conditions on the zero-order terms that govern the interplay between derivative perturbations, initial data size, and nonlinearity exponent.
 \end{abstract}

 

\section{Introduction}

In this paper we treat the blow-up for nonlinear wave type equation with perturbed derivatives. 

\subsection{Main Result}

We consider a vector field $A:=( A_1,\dots A_n)\in C^1(\R^n,\R^n)$ and $A_0\in C^1(\R,\R)$. 
For all $i=1, \dots n$ we introduce the following partial derivative with respect to $x_i$:
\begin{equation}\label{cov_space}
\tilde{\partial}_{x_i}:=\partial_{x_i}+A_i(x).
\end{equation}
In particular, the second partial derivative is given by
\[\tilde \partial_{x_ix_i}=(\partial_{x_i}+ A_i(x))(\partial_{x_i}+A_i(x)),\]
and the corresponding Laplace operator is $\tilde \Delta =\tilde \partial_{x_1x_1}+\dots \tilde \partial_{x_nx_n} $.\\
Similarly, given $ A_0:\R\to \R$ we introduce the following partial derivative with respect to $t$:
\begin{equation}\label{cov_time}
\tilde{\partial}_t:=\partial_t+  A_0(t);
\end{equation}
then, $\tilde\partial_{tt}:=(\partial_t+  A_0(t))(\partial_t+ A_0(t))$. \\
We set $\A:=(A_0,A)$ and we call $\A$-derivative the perturbed derivatives in \eqref{cov_space} or \eqref{cov_time}.

Let $h\in \mathcal{C}^1([0,\infty)\times \R^n,\R)$ and $F\in \mathcal{C}^1(\R,\R)$ positive functions such that\begin{equation}\label{eq:F_up}
F(s)\geq |s|^p,
\end{equation}
for some $p>1$.
Given $\varepsilon>0,$ we consider the Cauchy problem
\begin{equation}
\label{eq:covariant_wave_equation*}
\begin{cases}
\tilde\partial_{tt}v(t,x)- \tilde\Delta v(t,x)=h(t,x)v(t,x)+F(\tilde \partial_t^j v(t,x)), \quad (t,x)\in [0,\infty)\times \R^n, \\
v(0,x)=0,&\\
v_t(0,x)= \varepsilon v_1(x),&
\end{cases}
\end{equation}
with $j\in \{0,1\}$ and we investigate blow-up results and lifespan estimates with decaying initial data. \\
Our main assumption is that the vector field $A$ is conservative, namely there exists $U\in C^2(\R^n,\R)$ such that
\begin{equation}
\label{eq:potential}
    \nabla U(x)= A(x), \quad \text{ for all } x\in \R^n;
\end{equation}
moreover, we denote by $G:[0,\infty)\to \R$ a differentiable function satisfying 
\begin{equation}
\label{eq:Gdef}
G'(t)=A_0(t), \quad \text{for any } t\geq 0.
\end{equation}
We will prove the following result.
\begin{theoreme}
Let $n\geq 2$. Let us consider $v_1$ a radial smooth function satisfying 
\begin{equation}
    v_1(r)\geq \frac{M}{(1+r)^{\alpha+1}}, \quad \forall r>0,
\end{equation}
for some $\alpha>-1$ and $M>0.$ Assume that 
\begin{equation}
    \lim_{t\to+\infty} \left[\Big(\frac{2-j}{p-1}-\alpha\Big)\log(t)-\max_{s\in [0,t]}G(s)
    +\frac{1}{p} \min_{\xi\in [0,3t]} U\left(\xi\right) -\max_{\xi\in [0,3t]} U\left(\xi\right)\right]=+\infty,
\end{equation}
then the classical solution $v$ of \eqref{eq:covariant_wave_equation*}  with $j=0,1$ and radial $h,U,v_1$ blows up.
\end{theoreme}
\subsection{Motivation for using the perturbed derivatives}

\label{sec:motivations}

Many well-known equations in literature can be rewritten as a wave-type equation with perturbed derivatives properly choosing suitable functions $A_0, A_1, \dots A_n$ in the definition of $\A$-derivatives.
The very general form is 
\begin{equation}\label{eq:general}
\tilde\partial_{tt} v-\tilde\Delta v=F,
\end{equation}
where 
\begin{equation}
\tilde{\partial}_t:=\partial_t+  A_0(t,x), \quad \tilde{\partial}_{x_i}:=\partial_{x_i}+A_i(t,x), \quad i=1,\dots, n.
\end{equation}

Assume, for instance, $A_0=0$ and $A:=( A_1,\dots A_n)\in C^1(\R\times \R^n,\R^n)$. 
In this case, the wave-type equation in problem \eqref{eq:general} reads as
\begin{equation} 
\label{eq:general_A0=0}
\partial_{tt}v-\Delta v - 2 A\cdot \nabla v - (\div_x(A)+|A|^2)v=F,
\end{equation}
that is a wave equation with lower order terms. It is well-known that the presence of lower order terms can influence well-posedness results for the corresponding Cauchy problem. 

Moreover, it would be interesting to consider the case of complex valued $\A$; in particular, if $A_0=0$ and $A$ is purely imaginary with $\div(A)=0$ then equation \eqref{eq:general_A0=0} corresponds to a wave equation with electromagnetic potential and Coulomb gauge.
Many papers treat this equation by obtaining linear estimates towards some global existence or scattering results (see \cite{DA2020,DAF2008, FV2009, GYZZ2022}). Up to our knowledge, no blow up result has been obtained in this setting, for this reason we started from the simplest case of real perturbation of the derivatives. 

Considering the opposite case $A_0=A_0(t,x)$ and $A=0$, we gain 
\[ 
\partial_{tt}v-\Delta v +2 A_0 \partial_t v + (\partial_t A_0 +A_0^2) v=F,
\]
that is, a wave equation with damping and mass term. Conversely, if we take a wave equation with damping and a mass term of the form 
\[
\partial_{tt}v-\Delta v +b(t) v_t +m^2(t) v=F;
\]
this can be rewritten in the form of $\mathbb A$-derivatives if and only if 
\[
m^2(t)=\frac{1}{2}\left(b'(t)+\frac{b^2(t)}{2}\right),
\]
that is, the possibility to write the equation using the $\mathbb A$-derivatives is strictly connected with an interaction among the low-order terms. 
On the other hand, if $b(t)=\frac{\mu}{2(1+t)}$, then we are treating the scale-invariant wave equation that has been very well studied in the last decade starting from the suitable change of variable that preserves the wave shape of the equation (see \cite{DALR2015, NPR2017, PT2021}).

In general, the introduction of the $\A$-derivatives allows one to gain some useful shape-preserving properties; in particular, if $A_0(t)=-\beta'(t)/\beta(t)$ for some $\beta\in C^1([0,\infty),\R)$, then one can easily see that
\[\tilde\partial_t (\beta(t)f(t))=\beta(t)\partial_t f(t)\]
for any $f\in C^1([0,\infty),\R).$
The application of this identity, together with a suitable change of coordinates, enable us to prove that our non-linear wave-type problem with perturbed derivatives \eqref{eq:covariant_wave_equation*} is equivalent to a classical wave equation with a perturbed non-linearity. 
For this reason, we choose to perturb the derivatives splitting the variables as in definitions \eqref{cov_space} and \eqref{cov_time}, that is $A_0$ depends only on $t$-variable and $A$ depends only on $x$-variables. In this case the following lemma holds true:
\begin{lemma}
\label{lem:change_of_coordinates}
Let us assume that $v=v(t,x)$ solves the equation \eqref{eq:covariant_wave_equation*}
where  $\tilde\partial_{x_i}$ and $\tilde\partial_t$ are defined in \eqref{cov_space}  and \eqref{cov_time} respectively; let $U\in C^2(\R^n,\R)$ and $G\in C^2([0,\infty),\R)$ given by equations \eqref{eq:potential} and, respectively, \eqref{eq:Gdef}. Then, the function 
\begin{equation*}
u(t,x)=e^{G(t)} e^{U(x)} v(t,x),
\end{equation*}
solves the Cauchy problem
\begin{equation}
\begin{cases}
\label{eq:wave_equation*}
\partial_{tt}u(t,x)-\Delta u(t,x)= h(t,x)u(t,x)+e^{G(t)} e^{U(x)} F\Big(e^{-G(t)} e^{-U(x)}\partial_t^j u(t,x)\Big),&\\
u(0,x)=0,&\\
u_t(0,x)=\varepsilon e^{U(x)} v_1(x),&
\end{cases}
\end{equation}
for all $(t,x)\in [0,\infty)\times \R^n$.
\end{lemma}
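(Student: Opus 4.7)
The plan is to verify the lemma by direct substitution using two conjugation identities, one for $\tilde\partial_t$ and one for the $\tilde\partial_{x_i}$'s. These will allow us to recast every $\A$-derivative of $v$ as a standard derivative acting on $u=e^{G(t)}e^{U(x)}v$, up to the overall factor $e^{-G(t)}e^{-U(x)}$.

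First I would establish the time identity. The paper already notes that $\tilde\partial_t(\beta(t) f(t))=\beta(t)\partial_t f(t)$ whenever $A_0(t)=-\beta'(t)/\beta(t)$. Since $G'=A_0$, the choice $\beta(t)=e^{-G(t)}$ works, and writing $v=e^{-G(t)}(e^{G(t)}v)$ gives $\tilde\partial_t v = e^{-G(t)}\partial_t(e^{G(t)}v)$. Applying the identity a second time to the function $e^{-G(t)}\partial_t(e^{G(t)}v)$ produces the iterated version
\[
\tilde\partial_{tt} v = e^{-G(t)}\partial_{tt}\bigl(e^{G(t)}v\bigr).
\]
The spatial identity is analogous: since $\nabla U=A$, the operator $\tilde\partial_{x_i}=\partial_{x_i}+\partial_{x_i}U$ satisfies $\tilde\partial_{x_i}(e^{-U(x)}g)=e^{-U(x)}\partial_{x_i}g$, which after a second application and summation over $i$ gives
\[
\tilde\Delta v = e^{-U(x)}\Delta\bigl(e^{U(x)}v\bigr).
\]

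With these two identities in hand I would plug in $v=e^{-G(t)}e^{-U(x)}u$. Using that $e^{G(t)}v=e^{-U(x)}u$ and $e^{U(x)}v=e^{-G(t)}u$, the perturbed derivatives become
\[
\tilde\partial_{tt}v=e^{-G(t)}e^{-U(x)}\partial_{tt}u,\qquad \tilde\Delta v=e^{-G(t)}e^{-U(x)}\Delta u,\qquad \tilde\partial_t^{\,j}v=e^{-G(t)}e^{-U(x)}\partial_t^{\,j}u
\]
for $j\in\{0,1\}$ (trivial for $j=0$, the first-order form of the time identity for $j=1$). Substituting into \eqref{eq:covariant_wave_equation*} and multiplying through by $e^{G(t)}e^{U(x)}$ produces exactly \eqref{eq:wave_equation*}. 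For the initial data, $u(0,x)=e^{G(0)}e^{U(x)}v(0,x)=0$ and
\[
u_t(0,x)=e^{G(0)}e^{U(x)}\bigl(G'(0)v(0,x)+v_t(0,x)\bigr)=\varepsilon\,e^{U(x)}v_1(x),
\]
where we normalise the primitive by $G(0)=0$, which is allowed since \eqref{eq:Gdef} determines $G$ only up to an additive constant.

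I do not expect any serious obstacle since the computation is essentially algebraic. The one step that deserves care is the iteration yielding $\tilde\partial_{tt}v=e^{-G}\partial_{tt}(e^G v)$: a brute-force expansion of $(\partial_t+A_0)^2 v$ throws up an $A_0'v$ term that must cancel against pieces of $\partial_t(A_0v)$, so chaining the first-order conjugation identity twice is cleaner than expanding in coordinates; the same remark applies to each $\tilde\partial_{x_ix_i}$.
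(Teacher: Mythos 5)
Your proof is correct, and it takes a genuinely different route from the paper's. The paper argues by brute force: it expands $\partial_{tt}u-\Delta u$ in terms of $v$ and its derivatives, separately expands $\tilde\partial_{tt}v-\tilde\Delta v$, solves the original equation for $\partial_{tt}v-\Delta v$, and substitutes, relying on the pairwise cancellation of all the lower-order terms $A_0'v$, $A_0^2v$, $2A_0\partial_t v$, $|A|^2v$, $\div(A)v$, $2A\cdot\nabla v$. You instead promote the first-order conjugation relations to operator identities, $\tilde\partial_t=e^{-G}\circ\partial_t\circ e^{G}$ and $\tilde\partial_{x_i}=e^{-U}\circ\partial_{x_i}\circ e^{U}$, and square them, so that $\tilde\partial_{tt}v=e^{-G}e^{-U}\partial_{tt}u$ and $\tilde\Delta v=e^{-G}e^{-U}\Delta u$ follow with all cancellations built in; the substitution into the equation is then a one-line computation. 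This is the standard gauge-transformation argument, it is shorter, and it makes transparent \emph{why} the lower-order terms must cancel (a point the paper itself gestures at in Section 1.2 with the identity $\tilde\partial_t(\beta f)=\beta\partial_t f$ but does not exploit in the proof). You also verify the initial conditions and flag the normalisation $G(0)=0$ needed for $u_t(0,x)=\varepsilon e^{U(x)}v_1(x)$; the paper's proof checks only the PDE and leaves both points implicit, although $G(0)=0$ is indeed assumed elsewhere in the paper. No gaps.
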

\begin{proof}
It holds 
\begin{equation*}
\partial_t u(t,x)=  A_0(t) u(t,x)+e^{G(t)} e^{U(x)} \partial_t v(t,x)=e^{G(t)} e^{U(x)} \tilde\partial_t v, 
\end{equation*}
and then,
\begin{align*}
\partial_{tt} u(t,x)&=  A_0'(t)u(t,x)+ A_0(t) \partial_t u(t,x)+ A_0(t)e^{G(t)} e^{U(x)} \partial_t v(t,x)+e^{G(t)} e^{U(x)}\partial_{tt}v(t,x)\\
&=  A_0'(t)u(t,x) +  A_0(t)^2 u(t,x)+2  A_0(t) e^{G(t)} e^{U(x)}  \partial_t v(t,x) +e^{G(t)} e^{U(x)}\partial_{tt}v(t,x).
\end{align*}
Moreover, for all $i=1, \dots, n$ we have
\begin{equation*}
\partial_{x_i} u= e^{G(t)}e^{U(x)}(A_i(x)v(t,x)+\partial_{x_i}v(t,x)),
\end{equation*}
and then,
\begin{align*}
\partial_{x_ix_i} u= e^{G(t)}e^{U(x)}(A_i(x)^2v(t,x)+\partial_{x_i} A_i(x)v(t,x)+2 A_i(x)\partial_{x_i}v(t,x)+\partial_{x_ix_i}v(t,x));
\end{align*}
we conclude that
\begin{equation}
\label{eq:wave_resolution*}
\begin{aligned}
\partial_{tt}u(t,x) &- \Delta u(t,x)=  A_0'(t)u(t,x) +  A_0(t)^2 u(t,x)+2  A_0(t) e^{G(t)}e^{U(x)} \partial_t v(t,x)\\& 
-e^{G(t)}e^{U(x)}\Big(v(t,x)| A(x)|^2+\div(A(x))v(t,x)+2  A(x)\cdot \nabla v(t,x)  \Big) \\
& +e^{G(t)} e^{U(x)}(\partial_{tt}v(t,x)-\Delta v(t,x)),
\end{aligned}
\end{equation}
where $\nabla:=(\partial_{x_1},\dots, \partial_{x_n}$).
On the other hand, it holds 
\begin{align*}
\tilde \partial_{tt}v(t,x)&=(\partial_t+ A_0(t))(\partial_t v(t,x)+ A_0(t)v(t,x))\\
& =\partial_{tt}v(t,x)+ A_0'(t)v(t,x)+2 A_0(t)\partial_t v(t,x)+ A_0(t)^2 v(t,x),
\end{align*}
and similarly, 
\begin{align*}
\tilde \partial_{x_i x_i}v(t,x)=\partial_{x_i x_i}v(t,x)+\partial_{x_i}A_i(x)v(t,x)+2A_i(x)\partial_{x_i}v(t,x)+ A_i(x)^2 v(t,x),
\end{align*}
for all $i=1,\dots n$; we get
\begin{equation*}
\begin{aligned}
\tilde \partial_{tt}v(t,x)-\tilde \Delta v(t,x)&= \partial_{tt}v(t,x)+ A_0'(t)v(t,x)+2 A_0(t)\partial_t v(t,x)+ A_0(t)^2 v(t,x) \\
& -\Delta v(t,x)-v(t,x)\div(A(x))-2A (x)\cdot \nabla v(t,x)-v(t,x)|A (x)|^2 .
\end{aligned}
\end{equation*}
In particular, $v=v(t,x)$ solves equation \eqref{eq:covariant_wave_equation*} if and only if, 
\begin{equation}
\label{eq:covariant_wave_resolution*}
\begin{aligned}
\partial_{tt}v(t,x)-\Delta v(t,x)=&h(t,x)v(t,x)+F(\tilde \partial_t^j v(t,x))\\
&- A_0'(t)v(t,x)-2 A_0(t)\partial_t v(t,x)- A_0(t)^2 v(t,x) \\
&+v(t,x)\div(A(x))+2A(x)\cdot \nabla v(t,x)+v(t,x)|A(x)|^2.
\end{aligned}
\end{equation}
Repleacing \eqref{eq:covariant_wave_resolution*} in \eqref{eq:wave_resolution*} we get
\begin{equation*}
\begin{aligned}
\partial_{tt}u(t,x) - \Delta u(t,x)=
%
%
%
h(t,x)u(t,x)+e^{G(t)} e^{U(x)}F\Big(e^{-G(t)} e^{-U(x)}\partial_t^j u(t,x)\Big).
\end{aligned}
\end{equation*}
\end{proof}

\subsection{Radial assumption and critical exponent}
Many authors have studied existence theorems and blow up results for the following non-linear classical wave equation
\begin{equation}
\label{eq:classical_wave_equation}
\begin{cases}
\partial_{tt}v(t,x)- \Delta v(t,x)=|\partial_t^jv(t,x)|^p, \quad (t,x)\in [0,\infty)\times \R^n, \\
v(0,x)=0,&\\
v_t(0,x)= \varepsilon v_1(x).&
\end{cases}
\end{equation}
which corresponds to problem \eqref{eq:covariant_wave_equation*} with $\A\equiv 0$, $h\equiv 0$ and $j\in \{0,1\}$. In the case of compactly supported initial data the critical exponent for problem \eqref{eq:classical_wave_equation} is 
\begin{equation}
p_j(n)=\begin{cases}
\frac{n+1+\sqrt{n^2+10n-7}}{2(n-1)} \quad &\text{ if } j=0,\\
\frac{n+1}{n-1} \quad & \text{ if } j=1;
\end{cases}
\end{equation}
namely, problem \eqref{eq:classical_wave_equation} admits a global solution for \textit{small} initial data if $p>p_j(n)$, and all the solutions to \eqref{eq:classical_wave_equation} blow up in finite time if $1<p\leq p_j(n)$.  For $j=0$ the exponent $p_0(n)$ is known as \textit{Strauss exponent} and it is the positive root of the polynomial equation $(n-1)p^2-(n+1)p-2=0$; whereas for $j=1$ the exponent $p_1(n)$ is known as \textit{Glassey exponent}. A complete bibliography about global existence results can be found in \cite{GLS1997}. \\
It is well-known that if the initial data has noncompact
support, one can find blowing-up solutions even for $p>p_0(n)$; in particular, slow decay yields blowing-up solutions while rapid
decay assures global solution.\\

More in details, suppose that $v_1$ is radially symmetric and satisfies 
\begin{align}
\label{eq:decay_data_intro}
&v_1(x)\geq \frac{M}{(1+|x|)^{\alpha+1}}, \quad  M>0, \; \alpha>-1;
\end{align}
for $j=0$ in \cite{takamura1995} the author proved that any classical solution to \eqref{eq:classical_wave_equation} blows up in finite time for any $1<p\leq p_c(\alpha)$ where 
\[p_c(\alpha):=1+\frac{2}{\alpha}\] 
independently on the space dimension $n\geq 2$. Later we will refer to $p_c(\alpha)$ as  \textit{slowing decaying critical exponent} (see Section \ref{sec:examples}). Similarly, for $j=1$ in \cite{takamura1995} has been proven that any classical solution to \eqref{eq:classical_wave_equation} blows up in finite time for any $1<p\leq 1+1/(\alpha+1)$. 
Additionally, from \cite{takamura1995} we know that the maximal existence time $T(\eps)$ of classical solutions to \eqref{eq:classical_wave_equation} satisfies the following lifespan estimate 
\begin{equation}
T(\eps)\leq C \eps^{-\frac{p-1}{2-jp-(p-1)\alpha}},
\end{equation}
for some constant $C>0$ independent of $t$ and $\eps$. 
The proof of blow-up results in high space dimension $n\geq 4$ requires the assumption of radial symmetry of the initial data, which could be avoided in low space dimension $n=2,3$ due to the positivity of the solution to the free wave equation with positive initial data \cite{A1986, AT1992}. In our case, the $\A$-derivatives that perturb the wave operator introduce greater complexity to the problem. Therefore, we assume radial symmetry in every dimension. On the other hand, the radial assumption is also necessary in \cite{K97} and \cite{K98} to prove the existence of slowing decaying radial solutions to \eqref{eq:classical_wave_equation} with $j=0$ for every $p>p_c(\alpha)$. \\

As already mentioned in Section \ref{sec:motivations}, the scale-invariant equation can be rewritten in the form \eqref{eq:general}, and the results related to it in the slowly decaying case have been studied in \cite{CGL2021} and \cite{GirardiLucente2021}. More precisely, we considered the following problem
\begin{equation}
\label{eq:scale-invariant_wave_equation}
{\small
\begin{cases}
\partial_{tt}v(t,x)- \Delta v(t,x)+\frac{\mu}{(1+t)}v_t +\frac{\mu}{2}\left(\frac{\mu}{2}-1\right)\frac{1}{(1+t)^2}v=|\tilde\partial_t^jv(t,x)|^p, \quad (t,x)\in [0,\infty)\times \R^n, \\
v(0,x)=0,&\\
v_t(0,x)= \varepsilon v_1(x),&
\end{cases}}
\end{equation}
which corresponds to problem \eqref{eq:covariant_wave_equation*} with $A_0=\frac{\mu}{2(1+t)}$, $A\equiv 0$, $h\equiv 0$ and $j\in \{0,1\}$. \\
If $j=0$ and $v_1$ has compact support, then the presence of scale invariant damping and mass in problem \eqref{eq:scale-invariant_wave_equation} determine a shift of the critical exponent which becomes $p_0(n+\mu)$ (see \cite{PT2021} and the references therein for additional details). In \cite{CGL2021} we removed the compact support assumption for $v_1$ and assume that $v_1$ is a slowly decaying initial data satisfying \eqref{eq:decay_data_intro}, and we proved that any solution to \eqref{eq:scale-invariant_wave_equation} blows up in finite time for any $1<p\leq p_c(\alpha+\mu/2)$. This confirm the shift of the critical exponents. In addition we showed that the lifespan satisfies
\begin{equation*}
T(\eps)\leq C \eps^{-\frac{2(p-1)}{4-(\mu+2\alpha)(p-1)}},
\end{equation*}
for some constant $C>0$ independent of $\eps$. A detailed review about global existence results for problem \eqref{eq:scale-invariant_wave_equation} is provided in \cite{CGL2021}. \\
In \cite{PT2021} the authors considered the same scale-invariant damped wave equation with mass \eqref{eq:scale-invariant_wave_equation}, with classical derivative-type nonlinearity $|\partial_t u|^p$ in place of the perturbed derivative-type nonlinearity $|\tilde\partial_t u|^p$; for small initial data $v_1$ with compact support, they proved that any solution blows up in finite time for any $1<p\leq p_1(n+\mu)$. In \cite{GirardiLucente2021} we considered problem \eqref{eq:scale-invariant_wave_equation} with a perturbed derivative-type non-linearity, namely $|\tilde\partial_t u|^p$ in place of $|\partial_t u|^p$; assuming $v_1$ to be a slowly decay function satisfying \eqref{eq:decay_data_intro} we proved that any classical solution blows up in finite time for any $1<p\leq p_c(2+2\alpha+\mu)$ and the maximal existence time satisfies
\begin{equation}
T(\eps)\leq C \eps^{-\frac{2(p-1)}{4-2p-(\mu+2\alpha)(p-1)}}.
\end{equation}
In this work, we adopt a much broader perspective given by the study of problem \eqref{eq:covariant_wave_equation*}, and we recover, as a confirmation, the results in \cite{CGL2021} and \cite{GirardiLucente2021}. \\
Although it falls outside the scope of this work, which focuses on blow-up with slowly decaying data, we would like to recall that the case of the scale-invariant wave equation for quasilinear equations was considered in \cite{GeorgievLucente2021}, where the corresponding $\A$-derivatives appeared at second order.

\subsection{Plan of the paper} 
In Section \ref{sec:blow-up} we provide a detailed version and the proof of the Main Theorem. We also show five examples of applications to emphasize the novelties of our results with respect to the existing literature. 
In Section \ref{sec:lifespan} we give a lifespan estimate in some particular examples that includes a generalization of \cite{CGL2021}.

\section{Blow-up results}
\label{sec:blow-up}
In this section we assume that $v_1(x)=v_1(\modulo{x})$, $h(t,x)=h(t,\modulo{x})$ and $U(x)=U(\modulo{x}).$ We set $r:=\modulo{x}.$\  The radial assumption gives us the possibility to apply the following crucial lemmas from \cite{takamura1995}.
\begin{lemma}\label{lemma:Tdeltam}
Let $n\geq 2$ and $m=[n/2]$. Given a smooth function $\psi=\psi(r)$, we 
denote by $u^{0}(t,r)$ the solution of the free wave problem 
\begin{equation}\label{eq:linear_radial}
\begin{cases}
    u^0_{tt}-u^0_{rr}-\frac{n-1}{r}u^0_r=0,  \quad & (t,r)\in [0,\infty)\times (0,\infty),\\
   u^0(0,r)=0, \\ u^0_t(0,r)= \psi(r)\,.
    \end{cases}
\end{equation}
Let $u=u(t,r)$ be a solution to the corresponding non-linear Cauchy problem 
\begin{align}\label{eq:comparisonlemma}
\begin{cases}
    u_{tt}-u_{rr}-\frac{n-1}{r}u_r =H(t,r,u,\partial_t u),  \quad & (t,r)\in [0,\infty)\times (0,\infty),\\
    u(0,r)=0, \\  
    u_t(0,r)= \psi(r)\,. 
    \end{cases}
\end{align}
If $H$ is a $C^1$ nonnegative function, then there exists a constant $\sigma_n> 0$ such that 
\begin{align}\label{eq:um}
u(t,r)&\geq  u^0(t,r)+\frac{1}{8r^m}\int_0^t d\tau \int_{r-t+\tau}^{r+t-\tau}\lambda^m H(\tau,\lambda,u(\tau,\lambda), \partial_t u(\tau,\lambda))d \lambda,\\
\label{eq:um0}
u^0(t,r)&\geq \frac{1}{8r^m} \int_{r-t}^{r+t} \lambda^m \psi(\lambda) d\lambda,
\end{align}
provided
\begin{equation}
\label{eq:r-t>}
    r-t\geq \sigma_n t>0.
\end{equation}
\end{lemma}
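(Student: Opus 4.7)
The plan is to derive an explicit lower representation formula for radial solutions of the free wave equation, and then combine it with Duhamel's principle to handle the non-linear problem. The geometric condition $r-t\geq \sigma_n t$ isolates a region far from the light cone of the origin, where the classical radial kernel is uniformly bounded below by a positive constant depending only on $n$.

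First, I would reduce the radial wave equation to a one-dimensional problem. For odd $n=2k+1$, the substitution $w(t,r)=r^{(n-1)/2}u^{0}(t,r)$ transforms \eqref{eq:linear_radial} into a one-dimensional wave equation with an inverse-square potential, for which a d'Alembert-type integral representation is available. For even $n$, I would apply the method of descent from dimension $n+1$, starting from the corresponding odd-dimensional formula and integrating in the extra variable. In both cases this yields a representation of $u^{0}(t,r)$ as an integral of $\psi$ over $[r-t,r+t]$, weighted by a kernel of the form $\lambda^{m}/r^{m}$ times a positive factor, plus error terms generated either by the potential in the odd case or by the descent procedure in the even case.

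Next, I would exploit the hypothesis $r-t\geq \sigma_n t>0$: in that wave-zone region, all integration variables $\lambda\in [r-t,r+t]$ satisfy $\lambda\sim r\sim t$, so the correction terms can be absorbed into a positive multiple of the main term, provided $\sigma_n$ is chosen sufficiently large depending only on $n$. A careful quantitative accounting then produces the universal constant $1/8$ in \eqref{eq:um0}. To pass from \eqref{eq:um0} to \eqref{eq:um}, I would apply Duhamel's principle, writing $u=u^{0}+u^{H}$ with $u^{H}$ the solution of the radial wave equation with zero initial data and source $H$. Slicing in $\tau\in[0,t]$ and applying the free-wave lower bound to each slice, one obtains an integral over the characteristic triangle $\{(\tau,\lambda):0\leq\tau\leq t,\ |r-\lambda|\leq t-\tau\}$. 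The nonnegativity of $H$ (here it enters crucially, unlike in \eqref{eq:um0}) ensures that the Duhamel integral keeps its lower bound even when the slicing point $(\tau,r)$ does not literally satisfy the Takamura geometric condition, because the condition on the original $(t,r)$ propagates monotonically along the characteristic triangle.

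The main obstacle is the bookkeeping of the correction terms in the radial representation formula, particularly for even $n$ where descent from $n+1$ produces an additional integration that generates terms singular near $\lambda=r\pm(t-\tau)$. The threshold $\sigma_n$ has to be chosen quantitatively large enough to ensure these corrections are dominated by the positive main contribution uniformly in $(t,r,\lambda)$. This is a direct quantitative check that follows the analysis of Takamura in \cite{takamura1995}, and once $\sigma_n$ is fixed, the rest of the argument consists of assembling the pieces.
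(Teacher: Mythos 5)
The paper itself gives no proof of Lemma \ref{lemma:Tdeltam}: it is imported verbatim from \cite{takamura1995} (Remark \ref{rem:sigma_n} identifies $\sigma_n$ with the constant $\delta_m$ of Lemma 2.5 there), so your proposal has to be measured against Takamura's argument. Your overall architecture is the right one, and the Duhamel half is sound: writing $u=u^0+L[H]$, noting that $r-t\geq\sigma_n t$ implies $r-(t-\tau)\geq\sigma_n(t-\tau)$ for every $0\leq\tau\leq t$, and then using $H\geq0$ to push the kernel lower bound through each time slice is exactly how \eqref{eq:um} follows from the linear estimate.

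The linear half, however, has a genuine gap as written. You propose to represent $u^0$ as a main term with kernel $\lambda^m/r^m$ plus corrections (from the inverse-square potential after the Liouville transform in odd dimensions, from the descent integral in even ones), and to ``absorb the corrections into a positive multiple of the main term'' for $\sigma_n$ large. This cannot work at the level you describe: the lemma imposes no sign on $\psi$, and even granting $\psi\geq0$ (which \eqref{eq:um0} tacitly requires, and which holds in the application since $\psi=\varepsilon e^{U}v_1>0$), the quantity $\int_{r-t}^{r+t}\lambda^m\psi\,d\lambda$ controls the corrections only through a kernel-level, pointwise comparison, not through the norm estimate your sketch invokes; for odd $n\geq5$ the Neumann series for the $c_n r^{-2}$ potential would have to be resummed and its tail dominated pointwise by the leading kernel, a step you do not supply. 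What \cite{takamura1995} actually does is bypass corrections entirely: the radial solution has an exact closed-form representation whose kernel is an explicit (Legendre-type) function of $z=\frac{\lambda^2+r^2-t^2}{2\lambda r}$, and on the set \eqref{eq:r-t>} one checks directly that $z$ is bounded below, hence the kernel is bounded below by $\tfrac18(\lambda/r)^m$ with nothing left to absorb (in even dimensions the descent integral of this kernel is manifestly positive). Two further inaccuracies: on \eqref{eq:r-t>} one has $\lambda\sim r$ and $r\geq(1+\sigma_n)t$ but \emph{not} $r\sim t$ (the ratio $r/t$ is unbounded above), so the correct smallness parameter is $t/\lambda\leq t/(r-t)\leq\sigma_n^{-1}$; and taking $\sigma_n$ ``sufficiently large'' still proves the existence statement but is the opposite of what the cited source achieves, namely $\sigma_n=\delta_m\in(0,1)$, and enlarging $\sigma_n$ shrinks the blow-up set $\Sigma_\delta$ on which the rest of the paper works.
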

 \begin{lemma}
 \label{lem:comparison}
 Let $n\geq 2$ and $u$ be a solution to \eqref{eq:comparisonlemma}
with $\psi>0$. Assume that $H$ is a $C^1$ function satisfying $H(t,r,s,z)\geq 0$ for any $s,z\geq0$; then, we have $u(t,r)\geq 0$ for any $(t,r)$ such that \eqref{eq:r-t>} holds true.
 \end{lemma}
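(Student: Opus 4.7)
The plan is to reduce the perturbed Cauchy problem to a classical semilinear wave equation via Lemma~\ref{lem:change_of_coordinates} and then run a Takamura-type iteration. The function $u=e^{G(t)}e^{U(x)}v$ solves \eqref{eq:wave_equation*} with radial initial velocity $\varepsilon e^{U(r)}v_1(r)$, and the bound $F(s)\ge|s|^p$ together with Lemma~\ref{lem:comparison} (to ensure $u\ge 0$ on $r-t\ge\sigma_n t$, so the $hu$ term may be dropped) and Lemma~\ref{lemma:Tdeltam} yield, on the same cone,
\[u(t,r)\ge u^0(t,r)+\frac{1}{8r^m}\int_0^t d\tau\int_{r-t+\tau}^{r+t-\tau}\lambda^m e^{-(p-1)G(\tau)}e^{-(p-1)U(\lambda)}|\partial_\tau^j u(\tau,\lambda)|^p\,d\lambda.\]
Using $v_1(\lambda)\ge M(1+\lambda)^{-\alpha-1}$ and $e^{U(\lambda)}\ge e^{\min_{[0,3t]}U}$ on the relevant $\lambda$-range (inside $[0,3t]$ whenever $r\le 2t$), the standard Takamura computation gives a base estimate
\[\partial_t^j u^0(t,r)\ge C_0 M\varepsilon\,e^{\min_{[0,3t]}U}\,(t-r+\sigma_n t)^{1-j}r^{-m}(1+r)^{-\alpha}\]
(for $j=1$ one differentiates the Kirchhoff-type representation in time).

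Next, I iterate. The ansatz $\partial_t^j u(t,r)\ge A_k(t-r+\sigma_n t)^{a_k}(1+r)^{-b_k}r^{-m}$ plugged into the Duhamel inequality and estimated by the usual reduction of the double integral yields the recursions
\[a_{k+1}=p\,a_k+(2-j),\qquad b_{k+1}=p\,b_k,\qquad A_{k+1}\ge D(a_k)\,A_k^p\,e^{-(p-1)\max_{[0,t]}G}\,e^{-(p-1)\max_{[0,3t]}U},\]
with $a_0=1-j$, $b_0=\alpha$, $A_0\asymp M\varepsilon e^{\min_{[0,3t]}U}$, and $D(a_k)$ a rational prefactor of polynomial type in $a_k$. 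Solving the linear recursions gives $a_k=p^k\bigl(a_0+\tfrac{2-j}{p-1}\bigr)-\tfrac{2-j}{p-1}$ and $b_k=p^k\alpha$. Evaluating at $r\asymp 2t$, so that $t-r+\sigma_n t\asymp t$ and $1+r\asymp t$, taking logarithms and dividing by $p^k$, the dominant asymptotic behaviour as $k\to\infty$ is
\[\frac{\log\partial_t^j u(t,r)}{p^k}\gtrsim \Bigl(\tfrac{2-j}{p-1}-\alpha\Bigr)\log t-\max_{[0,t]}G+\tfrac{1}{p}\min_{[0,3t]}U-\max_{[0,3t]}U+\log(M\varepsilon)+K_p,\]
where $K_p$ is a $p$-dependent constant; the factor $1/p$ in front of $\min U$ arises because the initial constant $A_0\propto e^{\min U}$ is raised to the $p$-th power exactly once at the first iteration, and is then set against the geometric accumulation of the per-step loss $e^{-(p-1)\max U}$ summed by $\sum_{j\ge0}p^{-j}=p/(p-1)$.

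Under the hypothesis of the theorem the bracket on the right-hand side tends to $+\infty$ as $t\to\infty$. Fixing $t$ large enough that this bracket exceeds $1$, the lower bound on $\partial_t^j u(t,r)$ diverges as $k\to\infty$, contradicting the assumed existence of a classical solution on $[0,\infty)$ and forcing blow-up in finite time; translating back to $v=e^{-G(t)-U(x)}u$ preserves the blow-up. The main obstacle is the bookkeeping in the iteration: carefully tracking how the factor $e^{\min U}$ propagates through the first iteration versus subsequent ones, matching the coefficient $\tfrac{2-j}{p-1}-\alpha$ against the recursion for $a_k$, and controlling the rational prefactor $D(a_k)$ so that it does not disturb the $p^k$-scaling of $\log A_k$. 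Handling $j=1$ further requires differentiating the Duhamel identity and reapplying Lemma~\ref{lem:comparison} to keep $\partial_t u$ nonnegative through each iteration, in the spirit of \cite{takamura1995,CGL2021,GirardiLucente2021}.
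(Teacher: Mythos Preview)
Your proposal does not address the stated lemma at all. The statement you were asked to prove is Lemma~\ref{lem:comparison}: a positivity result saying that if $\psi>0$ and $H(t,r,s,z)\ge 0$ for $s,z\ge 0$, then the radial solution $u$ of \eqref{eq:comparisonlemma} satisfies $u(t,r)\ge 0$ on the region $r-t\ge \sigma_n t>0$. What you have written is instead a sketch of the blow-up argument for the Main Theorem (essentially Theorems~\ref{th.main} and~\ref{th.main.derivative}): you reduce via Lemma~\ref{lem:change_of_coordinates}, invoke Lemma~\ref{lem:comparison} as a tool, and then run the Takamura iteration to force $u\to\infty$. In other words, you \emph{use} the lemma rather than \emph{prove} it.

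For the record, the paper itself does not supply a proof of Lemma~\ref{lem:comparison}; it is quoted directly from \cite{takamura1995} (see the sentence preceding Lemma~\ref{lemma:Tdeltam}). A correct proof would proceed, as in Takamura's work, via the radial representation formulas for the wave equation: one shows that in the region $r-t\ge \sigma_n t$ the solution $u$ is expressed through iterated spherical means of $\psi$ and a Duhamel integral of $H$, all with nonnegative kernels, and then a continuity/bootstrap argument keeps $u\ge 0$ (since $H$ is nonnegative precisely when its last two arguments are). None of this appears in your write-up.
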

The first version of our result is the following.
\begin{theorem}
\label{th.main}
Let $n\geq 2$ and $\sigma_n>0$ given by Lemma \ref{lemma:Tdeltam}. Let us consider $v_1$ a radial smooth function satisfying 
\begin{equation}\label{eq:data_assumption}
    v_1(r)\geq \frac{M}{(1+r)^{\alpha+1}}, \quad \forall r>0,
\end{equation}
for some $\alpha>-1$ and $M>0.$ Assume that 
\begin{equation}\label{eq:interaction}
    \lim_{t\to+\infty} \left[\Big(\frac{2}{p-1}-\alpha\Big)\log(t)-\max_{s\in [0,t]}G(s)
    +\frac{1}{p} \min_{r\in [\sigma_n t,(2+\sigma_n)t]} U(r) -\max_{r\in [\sigma_n t,(2+\sigma_n)t]} U(r)\right]=+\infty,
\end{equation}
then the classical solution $v$ of \eqref{eq:covariant_wave_equation*}  with j=0 and radial $h,U,v_1$ blows up.
\end{theorem}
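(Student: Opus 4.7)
By Lemma~\ref{lem:change_of_coordinates} with $j=0$, the substitution $u(t,x):=e^{G(t)+U(x)}v(t,x)$ converts \eqref{eq:covariant_wave_equation*} into a classical radial wave equation
\[
u_{tt}-u_{rr}-\tfrac{n-1}{r}u_r \;=\; h(t,r)\,u + e^{G(t)+U(r)}F\!\left(e^{-G(t)-U(r)}u\right)
\]
with data $u(0,r)=0$, $u_t(0,r)=\varepsilon e^{U(r)}v_1(r)\geq \varepsilon M e^{U(r)}(1+r)^{-\alpha-1}$. Positivity of $h$, $F$, $v_1$ and $e^U$ together with Lemma~\ref{lem:comparison} force $u\geq 0$ on the cone $r-t\geq \sigma_n t$; the assumption $F(s)\geq s^p$ then yields the source lower bound $e^{(1-p)(G(t)+U(r))}u^p$, so Lemma~\ref{lemma:Tdeltam} is applicable with $\psi(r):=\varepsilon e^{U(r)}v_1(r)$ and nonnegative kernel $H(\tau,\lambda,u,\partial_\tau u)\geq e^{(1-p)(G(\tau)+U(\lambda))}u(\tau,\lambda)^p$.

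\textbf{Iteration on a preserved wedge.} I would run the classical Takamura iteration on the triangular wedge
\[
\Sigma_t:=\left\{(\tau,\lambda)\,:\,0\leq\tau\leq t,\;\sigma_n t+\tau\leq\lambda\leq(2+\sigma_n)t-\tau\right\},
\]
whose base at $\tau=0$ is precisely the interval $[\sigma_n t,(2+\sigma_n)t]$ appearing in \eqref{eq:interaction} and which collapses to the single point $(t,(1+\sigma_n)t)$ at its apex. A short check based on the support of \eqref{eq:um} shows that $\Sigma_t$ is invariant under the integration: for every $(t',r')\in\Sigma_t$, the variables $(\tau,\lambda)$ in the recursion again stay in $\Sigma_t$. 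Setting $u_0:=u^0$ and
\[
u_{k+1}(t,r):=\frac{1}{8r^m}\int_0^t\!\!\int_{r-t+\tau}^{r+t-\tau}\!\lambda^m\,e^{(1-p)(G(\tau)+U(\lambda))}\,u_k(\tau,\lambda)^p\,d\lambda\,d\tau,
\]
one has $u\geq u_k$ for every $k\geq 0$ by induction on \eqref{eq:um}. From \eqref{eq:um0} and the slow-decay bound on $v_1$, a direct computation yields the baseline estimate $u_0(\tau,\lambda)\geq c_0\varepsilon M\,\tau\,(1+\lambda)^{-\alpha-1}e^{\underline U(t)}$ on $\Sigma_t$, where $\underline U(t):=\min_{[\sigma_n t,(2+\sigma_n)t]}U$. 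Writing $\bar G(t):=\max_{[0,t]}G$ and $\overline U(t):=\max_{[\sigma_n t,(2+\sigma_n)t]}U$, and bounding the kernel's exponentials by $e^{-(p-1)\bar G(t)}$ and $e^{-(p-1)\overline U(t)}$, an inductive argument produces a lower bound
\[
u_k\bigl(t,(1+\sigma_n)t\bigr)\;\geq\;B_k\,t^{\beta_k},
\]
where the arithmetic recursion $\beta_{k+1}=p\beta_k+2$ with $\beta_0=-\alpha$ yields $\beta_k=p^k\bigl(\tfrac{2}{p-1}-\alpha\bigr)-\tfrac{2}{p-1}$, and $B_k$ obeys a multiplicative recursion linearizing on a logarithmic scale.

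\textbf{Divergence and conclusion.} Dividing $\log u_k$ by $p^{k+1}$ and summing the resulting telescoped series in $k$, one obtains
\[
\lim_{k\to\infty}\frac{\log u_k\bigl(t,(1+\sigma_n)t\bigr)}{p^k}\;=\;\left(\tfrac{2}{p-1}-\alpha\right)\log t-\bar G(t)+\tfrac{1}{p}\underline U(t)-\overline U(t)+C_\varepsilon,
\]
with $C_\varepsilon$ a constant depending only on $\varepsilon,M,n,p,\sigma_n$. Under hypothesis \eqref{eq:interaction} this limit tends to $+\infty$ along some sequence $t_j\to\infty$, so $u_k(t_j,(1+\sigma_n)t_j)\to+\infty$ as $k\to\infty$; combined with $u\geq u_k$, this contradicts the existence of a global classical solution and forces blow-up of $v=e^{-G-U}u$.

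\textbf{Main obstacle.} The principal delicate point is obtaining the coefficient $\tfrac{1}{p}$ in front of $\underline U(t)$ in the limiting expression. A naive iteration in which $e^{U(\lambda)}$ appearing in the kernel is immediately bounded by $e^{\overline U(t)}$ and $e^{U(\mu)}$ inside the initial-data formula \eqref{eq:um0} by $e^{\underline U(t)}$ at every step only yields the weaker threshold with $\underline U(t)-\overline U(t)$. Reaching the sharp condition requires keeping the $\lambda$-dependent factor $e^{U(\lambda)}$ explicitly alive through the iteration and exploiting the cancellation between the $p$-th power of $u_k$ and the kernel's $e^{(1-p)U(\lambda)}$, so that the accumulated $\overline U(t)$ contribution---picked up at each of the infinitely many iteration steps---is balanced against a single $\underline U(t)$ contribution coming from the initial data with weight $1/p$. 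Verifying the invariance of $\Sigma_t$ and controlling the polynomial prefactors $(pb_k+1)(pb_k+2)$ so that they contribute only $O(1)$ after division by $p^{k+1}$ are the routine but careful pieces of the argument.
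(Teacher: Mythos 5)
Your overall strategy coincides with the paper's: reduce to a classical radial wave equation via Lemma~\ref{lem:change_of_coordinates}, obtain positivity from Lemma~\ref{lem:comparison}, run the Takamura iteration based on \eqref{eq:um}--\eqref{eq:um0}, and conclude from the divergence of $p^{-k}\log u_k$ at a point with $r=(1+\sigma_n)t$. Working on the fixed backward wedge $\Sigma_t$ rather than on the paper's region $\Sigma_\delta$ (with the restriction to the line $r=(1+\sigma_n)t$ postponed to the very end) is a cosmetic difference; the wedge invariance, the baseline bound, the recursion $\beta_{k+1}=p\beta_k+2$ with $\beta_0=-\alpha$, and the summability of the logarithms of the polynomial prefactors are all correct and reproduce the paper's sequences.

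The genuine gap is the coefficient $\tfrac1p$ in front of $\underline U(t)=\min_{[\sigma_n t,(2+\sigma_n)t]}U$ in your limiting expression. In the iteration you actually describe, the factor $e^{\underline U(t)}$ enters exactly once, through the initial data at step $0$, and is then raised to the power $p^k$ --- precisely like $\log(c_0\varepsilon M)$ and the power $t^{-\alpha}$ coming from the data, to both of which you correctly assign coefficient $1$ after normalization. Telescoping $\log B_k$ therefore gives
\[
\lim_{k\to\infty}p^{-k}\log u_k\bigl(t,(1+\sigma_n)t\bigr)\;=\;\Bigl(\tfrac{2}{p-1}-\alpha\Bigr)\log t-\max_{[0,t]}G+\underline U(t)-\overline U(t)+C_\varepsilon,
\]
with coefficient $1$, not $\tfrac1p$, on $\underline U(t)$. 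Your ``main obstacle'' paragraph concedes this but inverts the comparison: since $\underline U\geq\tfrac1p\,\underline U$ exactly when $\underline U\geq0$, the coefficient-$1$ criterion is \emph{implied} by hypothesis \eqref{eq:interaction} whenever $\underline U(t)$ stays bounded below, so in that regime no refinement is needed and your argument already yields the theorem. The refinement is needed precisely when $\underline U(t)\to-\infty$ (the situation of Example~\ref{ex:5} and of Example~\ref{ex:4} with $\ell<0$), and it is not clear that ``keeping $e^{U(\lambda)}$ alive'' delivers it: that bookkeeping is exactly the paper's recursion $d_{k+1}=pd_k$ with $d_1=1$, which gives $d_k=p^{k-1}$, the same $p^{k-1}$ normalization carried by $a_k$, $\ell_k$ and $C_k$ in \eqref{eq:Ck+1first}, and hence again coefficient $1$ on the minimum relative to every other term in $J$. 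As written, your proof establishes blow-up under \eqref{eq:interaction} with $\tfrac1p\min U$ replaced by $\min U$; to obtain the statement verbatim you must either add the hypothesis that $\min_{[\sigma_n t,(2+\sigma_n)t]}U$ is bounded from below, or exhibit an iteration in which the $\underline U$ contribution genuinely acquires the weight $\tfrac1p$ while the remaining terms keep weight $1$ --- a step that neither your sketch nor the cancellation heuristic you describe actually performs.
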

\begin{theorem}\label{th.main.derivative}
Let $n\geq 2$ and $\sigma_n>0$ given by Lemma \ref{lemma:Tdeltam}. Let us consider $v_1$ a radial smooth function satisfying 
\begin{equation}\label{eq:data_assumption_derivative}
    v_1(r)\geq \frac{M}{(1+r)^{\alpha+1}}, \quad \forall r>0,
\end{equation}
for some $\alpha>-1$ and $M>0.$ Assume that 
\begin{equation}\label{eq:interaction_derivative}
    \lim_{t\to+\infty} \left[\Big(\frac{1}{p-1}-\alpha\Big)\log(t)-\max_{s\in [0,t]}G(s)
    +\frac{1}{p} \min_{r\in [\sigma_n t,(2+\sigma_n)t]} U(r) -\max_{r\in [\sigma_n t,(2+\sigma_n)t]} U(r)\right]=+\infty,
\end{equation}
then the classical solution $v$ of \eqref{eq:covariant_wave_equation*}  with j=1 and radial $h,U,v_1$ blows up.
\end{theorem}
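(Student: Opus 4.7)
First, by Lemma~\ref{lem:change_of_coordinates}, the substitution $u(t,x)=e^{G(t)}e^{U(x)}v(t,x)$ reduces \eqref{eq:covariant_wave_equation*} with $j=1$ to the classical radial wave equation
\[
\partial_{tt} u - \Delta u = h(t,r)\, u + e^{G(t)}e^{U(r)}\, F\bigl(e^{-G(t)-U(r)}\,\partial_t u\bigr),
\]
with $u(0,r)=0$ and $u_t(0,r)=\varepsilon\, e^{U(r)}\,v_1(r)$. Combining \eqref{eq:F_up} with the positivity of $h$, $F$ and $v_1$, the source is bounded below by
\[
H(t,r,u,\partial_t u) \geq e^{(1-p)G(t)}\, e^{(1-p)U(r)}\, \snr{\partial_t u}^p,
\]
which is nonnegative whenever $u,\partial_t u \geq 0$. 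Lemma~\ref{lem:comparison} then yields $u\geq 0$ in the forward cone $r-t\geq \sigma_n t$, and Lemma~\ref{lemma:Tdeltam} supplies the Kirchhoff-type inequalities \eqref{eq:um}--\eqref{eq:um0} that drive the iteration.

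Second, I would produce a pointwise lower bound for $\partial_t u$, which is the quantity entering the nonlinearity when $j=1$. Plugging $\psi(\lambda)=\varepsilon e^{U(\lambda)}v_1(\lambda)$ into \eqref{eq:um0} and using \eqref{eq:data_assumption_derivative}, standard manipulations yield
\[
u^0(t,r) \geq C\varepsilon\, \exp\!\Bigl(\min_{\lambda\in[r-t,r+t]} U(\lambda)\Bigr)\, (1+r+t)^{-\alpha}
\]
in the region $r\geq (1+\sigma_n)t$ for $t$ large. A corresponding bound for $\partial_t u^0$, obtained by differentiating the explicit Kirchhoff representation in $t$ and keeping the dominant outgoing boundary contribution at $\lambda=r+t$, loses one factor of $t$:
\[
\partial_t u^0(t,r) \geq C\varepsilon\, \exp\!\Bigl(\min_{\lambda\in[r-t,r+t]} U(\lambda)\Bigr)\, (1+r+t)^{-\alpha-1}.
\]
This loss is precisely why the threshold moves from $2/(p-1)-\alpha$ in Theorem~\ref{th.main} to $1/(p-1)-\alpha$ in \eqref{eq:interaction_derivative}.

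Third, I would run the Kato--Takamura iteration in a characteristic strip $r\in[(1+\sigma_n) t,(1+\sigma_n)t+\delta t]$. Inserting the current lower bound for $\partial_t u$ into \eqref{eq:um} and differentiating in $t$ produces a sequence of lower bounds of the form
\[
\partial_t u(t,r) \geq A_k\, \varepsilon^{p^k}\, t^{-a_k}\, \exp\!\bigl(B_k(t,r)\bigr),\qquad k\geq 0,
\]
with $a_{k+1}=p\,a_k+(\alpha+1)-1$ and $B_k$ accumulating the weights $(1-p)G(\tau)$ and $(1-p)U(\lambda)$ from $H$, as well as the control of $U$ on the characteristic range $[\sigma_n\tau,(2+\sigma_n)\tau]$. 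Summing the resulting geometric series in $p$, the logarithm of the lower bound is controlled from below by
\[
p^k \!\left[\Bigl(\tfrac{1}{p-1}-\alpha\Bigr)\log t - \max_{[0,t]}G + \tfrac{1}{p}\!\min_{[\sigma_n t,(2+\sigma_n)t]}\!U - \max_{[\sigma_n t,(2+\sigma_n)t]}\!U + O(1)\right]
\]
uniformly for $t$ large and $k\geq 0$. Hypothesis \eqref{eq:interaction_derivative} is exactly the statement that this bracket tends to $+\infty$; so, fixing $t$ large enough and letting $k\to\infty$, the lower bound diverges, contradicting the finiteness of $\partial_t u(t,r)$ and forcing blow up in finite time.

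The main obstacle is the second step: obtaining a lower bound for $\partial_t u$ with the sharp exponent $\alpha+1$, because the integral representation \eqref{eq:um} is naturally adapted to $u$ rather than to its time derivative. One has to differentiate it carefully (tracking both characteristic boundary terms at $\lambda=r\pm(t-\tau)$ and the sign issues) or, equivalently, set up a parallel Kirchhoff-type inequality for $\partial_t u^0$. Tuning the coefficients in the recursion so that the iteration recovers exactly the threshold $1/(p-1)-\alpha$, together with the precise weights in $G$ and $U$ appearing in \eqref{eq:interaction_derivative}, is where the bookkeeping is most delicate.
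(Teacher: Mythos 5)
Your overall architecture (reduce via Lemma \ref{lem:change_of_coordinates}, establish positivity of $u$ on $r-t\ge\sigma_n t$, iterate lower bounds of the form $C_k t^{a_k}r^{-m}(r+t)^{-b_k}$ times exponential weights in $G$ and $U$) matches the paper's, and you correctly locate where the shift from $2/(p-1)-\alpha$ to $1/(p-1)-\alpha$ must originate. But your second step is a genuine gap, and you flag it yourself as the ``main obstacle'' without resolving it: your scheme needs a pointwise lower bound for $\partial_t u^0$, and then for $\partial_t u$ at every stage of the iteration, obtained by ``differentiating the Kirchhoff representation and keeping the dominant outgoing boundary contribution.'' Nothing in the paper's toolbox delivers this. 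Lemma \ref{lemma:Tdeltam} provides only one-sided \emph{inequalities} for $u$ and $u^0$, valid for all $n\ge 2$ including even dimensions where the representation is an integral over the solid backward cone rather than a sphere; an inequality cannot be differentiated in $t$ to yield a lower bound on $\partial_t u$, and the boundary terms at $\lambda=r\pm(t-\tau)$ carry no favorable sign in general. An iteration carried on $\partial_t u$ therefore never gets off the ground with these lemmas.

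The paper sidesteps the issue entirely and keeps the iteration on $u$ itself. In the Duhamel lower bound \eqref{eq:um} it exchanges the order of integration, discards the region $\lambda\in[r-t,r]$, and on $\lambda\in[r,r+t]$ uses $u(0,\lambda)=0$ together with H\"older's inequality to convert the inner time integral of $\lvert\partial_t u\rvert^p$ into a value of $u$ on the backward characteristic:
\begin{equation}
\int_0^{r+t-\lambda}e^{-(p-1)G(\tau)}\,\lvert\partial_t u(\tau,\lambda)\rvert^p\,d\tau \;\ge\; e^{-(p-1)G(r+t-\lambda)}\,\lvert u(r+t-\lambda,\lambda)\rvert^p .
\end{equation}
Substituting the previous lower bound for $u$ at $(r+t-\lambda,\lambda)$ and integrating $(r+t-\lambda)^{ap}$ in $\lambda$ yields the recursion $a^{\ast}=pa+1$ instead of $pa+2$ from the $j=0$ case --- exactly the loss of one power of $t$ you anticipated, achieved without ever estimating $\partial_t u$ pointwise. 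If you replace your second and third steps by this fundamental-theorem-of-calculus-plus-H\"older device, the remainder of your outline (the recursions for $a_k,b_k,d_k,\ell_k,C_k$ and the final positivity of the exponent $\tilde J$ along $r=(1+\sigma_n)t$ under \eqref{eq:interaction_derivative}) goes through as written.
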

\begin{remark}
\label{rem:sigma_n}
In the Main Theorem for easy of presentation we provided a weaker version of Theorem \ref{th.main} and Theorem \ref{th.main.derivative}; indeed, the constant $\sigma_n$ in Lemma \ref{lemma:Tdeltam} coincides with the constant $\delta_m$ in  \cite[Lemma 2.5]{takamura1995} which can be choosen in the interval $(0,1)$.
\end{remark}

\subsection{ Applying Theorem \ref{th.main}} 
\label{sec:examples}
    It is worth noticing that in condition \eqref{eq:interaction} there is interaction between the decay of the initial datum, the exponent of the nonlinear term and the growth of the potentials of the functions $A_0$ and $A$ appearing in our definition of time and space derivatives.  Let us show some examples in which this condition can be written in more explicit form, treating the logarithmic term in \eqref{eq:interaction} as the leading one. For this reason, we call \textit{slowly decaying critical exponent}
    \begin{equation}
    \label{eq:free_critical}
    p_c:=1+\frac{2}{\alpha},
    \end{equation}
    which firstly appears in \cite[Theorem 1.2]{takamura1995} with classical derivatives, that is 
    $A_i=0$ in \eqref{cov_space}, and $A_0\equiv 0$ in \eqref{cov_time}. We also say that time or space derivatives are \emph{effective} when they  change the critical exponent with respect to $p_c$.\\
    We provide several examples that are summarized in Example \ref{ex:4}. This choice allows us to present the various interactions in more detail.
\begin{example} 
\label{ex:scaleinvariant}
Denote by $\langle x \rangle = \sqrt{1+|x|^2}$. Let us consider the \textit{scale invariant} case $A_0(t)=\frac{\mu}{2(1+t)}$ and $A(x)=\frac{\eta}{2}\frac{x}{\langle x \rangle^2}.$ This means $G(t)=\frac{\mu}{2}\log(1+t)$ and $U(x)=\frac{\eta}{2}\log\langle x \rangle$.
Suppose that 
\begin{equation}\label{eq:interex}
\left(\frac{2}{p-1}-\alpha-\frac{\mu}{2}-\frac{\eta}{2}\left(1-\frac{1}{p}\right)\right)>0;
\end{equation}
then 
\[
\left(\frac{2}{p-1}-\alpha-\frac{\mu}{2}-\frac{\eta}{2}\left(1-\frac{1}{p}\right)\right)\log(t)\to +\infty;
\]
hence \eqref{eq:interaction} holds. Condition \eqref{eq:interex} gives a critical exponent that depends on the decay of the initial datum and  the functions appearing in our definitions \eqref{cov_space} and \eqref{cov_time} of space and time derivatives. 
In particular for $\eta=0$ we have the result contained in \cite{CGL2021}. In this case, time and space derivatives are \emph{effective} since their presence gives a blow up exponent different from $p_c$ in \eqref{eq:free_critical}.
\end{example}
\begin{example}
\label{ex:boundedcase}
Let us suppose that $U(x)$ is \textit{bounded}. An explicit example would be $A(x)=x\langle  x\rangle^{-\beta-1}$ with $\beta>1$, which corresponds to $U(x)=\frac{1}{1-\beta} \langle x\rangle^{-\beta+1}.$  \\
Consider $\mathfrak{m}$ and $\mathcal{M}$ such that $\mathfrak{m}\leq U(x)\leq \mathcal{M}.$ In this case we have that
\begin{align*}
\Big(\frac{2}{p-1}-\alpha\Big)\log(t)&-\max_{[0,t]}G(\xi)
    +\frac{1}{p} \min_{[\sigma_n t,(2+\sigma_n)t]} U(\xi) -\max_{[\sigma_n t,(2+\sigma_n)t]} U(\xi) \\
 &   \geq \Big(\frac{2}{p-1}-\alpha\Big)\log(t)-\max_{[0,t]}G(\xi)
    +\frac{1}{p} \mathfrak{m}-\mathcal{M}. 
    \end{align*}
Being $\displaystyle{\max_{\xi\in [0,t]}G(\xi)}\geq G(0)=0$, the condition \eqref{eq:interaction} holds provided
\begin{equation}
\label{eq:interaction_bounded}
    \lim_{t\to+\infty} \left[\Big(\frac{2}{p-1}-\alpha\Big)\log(t)-\max_{[0,t]}G(\xi)\right]
    =+\infty.
\end{equation}
Assuming in addition $A_0\geq 0$, if there exists $\displaystyle{\lim_{s\to +\infty}}A_0(s)s=0$ (for example $A_0\in L^1([0,\infty))$), then for any $\varepsilon>0$ it holds $\displaystyle{\max_{[0,t]}G(\xi)\geq G(t)\geq \varepsilon\log(t)} $ for large $t\geq 0$. This means that condition \eqref{eq:interaction_bounded} is verified for any $p<1+2/\alpha=p_c$. 
Whereas, if $\displaystyle{\lim_{s\to +\infty}}A_0(s)s=\gamma>0,$ then we require that  $\left(\frac{2}{p-1}-\left(\alpha+\gamma\right)\right)>0$, that is 
\[p<1+\frac{2}{\alpha+\gamma}<p_c\,.\]
Summarizing, if $U$ is bounded,  only the time derivative can be \emph{effective}.
\end{example}
\begin{example}\label{ex:3}
If $\displaystyle{\lim_{\xi \to+\infty}}\frac{U(\xi)}{\log(\xi)}= 0.$ 
Then, for any $\varepsilon>0$ arbitrarily small and $t$ sufficiently large, we may estimate
\begin{align*}
\frac{1}{p} &\min_{[\sigma_n t,(2+\sigma_n)t]} U(\xi) -\max_{[\sigma_n t,(2+\sigma_n)t]} U(\xi) \\
 &   \geq 
    \frac{\varepsilon }{p} \min_{[\sigma_n t,(2+\sigma_n)t]} (-\log (\xi)) - \varepsilon \max_{[\sigma_n t,(2+\sigma_n)t]} \log (\xi) \\
 &   = 
    -\frac{\varepsilon }{p} \log ((2+\sigma_n) t) - \varepsilon \log ((2+\sigma_n)t)\\
& = -\varepsilon\Big(\frac{1}{p}+1\Big)\log(t)- \log\left((2+\sigma_n)^{-\varepsilon(1+\frac{1}{p})}\right).
    \end{align*}
Proceeding as in Example \ref{ex:boundedcase} we get
condition \eqref{eq:interaction} provided \eqref{eq:interaction_bounded}. Again, only the time derivative is \emph{effective} when $\displaystyle{\lim_{s\to +\infty}}A_0(s)s=\gamma>0$, with blow up exponent $p<1+2/(\alpha+\gamma)<p_c$.
\end{example}
\begin{example}\label{ex:4}
Now, we consider the case $\displaystyle{\lim_{\xi\to +\infty}\frac{U(\xi)}{\log(\xi)}}=\ell>0$.
For any $\varepsilon\in (0,\ell)$ arbitrarily small and $t$ sufficiently large, we may estimate
\begin{align*}
\frac{1}{p} &\min_{[\sigma_n t,(2+\sigma_n)t]} U(\xi) -\max_{[\sigma_n t,(2+\sigma_n)t]} U(\xi) \\
 &   \geq 
    \frac{(\ell-\varepsilon) }{p} \min_{[\sigma_n t,(2+\sigma_n)t]} \log (\xi) - (\ell+\varepsilon) \max_{[\sigma_n t,(2+\sigma_n)t]} \log (\xi) \\
 &   = 
    \frac{\ell-\varepsilon }{p} \log (\sigma_n t) - (\ell+\varepsilon) \log ((2+\sigma_n)t)\\
& = \left(\frac{\ell-\eps}{p}-\ell-\eps\right)\log(t)- \log\left((2+\sigma_n)^{(\ell+\varepsilon)}\right)+\log\left(\sigma_n^\frac{\ell-\varepsilon}{p}\right).
    \end{align*}
Being $\displaystyle{\max_{\xi\in [0,t]}G(\xi)}\geq G(0)=0$, the condition \eqref{eq:interaction} holds provided
\begin{equation}
\label{eq:interaction_bounded}
    \lim_{t\to+\infty} \left[\left(\frac{2}{p-1}-\alpha-\ell\left(1-\frac{1}{p}\right)\right)\log(t)-\max_{[0,t]}G(\xi)\right]
    =+\infty.
\end{equation}
Assuming $A_0\geq 0$, proceeding as in Example \ref{ex:boundedcase}, if  $\displaystyle{\lim_{s\to +\infty}}A_0(s)s=0$, then the space derivative is always effective. In particular, if $\alpha+\ell\leq 0$ one obtains the blow result for any $p>1$; whereas, if $\alpha+\ell> 0$ the solution to \eqref{eq:covariant_wave_equation*} blows up in finite time for any $1<p<p_\ell(\alpha)$ where $p_\ell(\alpha)$ is the unique root greater than one of the following 
\begin{equation}
(\alpha+\ell)p^2-(\alpha+2+2\ell)p+\ell=0.
\end{equation}
Analogously, if $\displaystyle{\lim_{s\to +\infty}}A_0(s)s=\gamma>0,$ then we require  
\begin{equation}
\left(\frac{2}{p-1}-\left(\alpha+\gamma\right)-\ell\left(1-\frac{1}{p}\right)\right)>0.
\end{equation}
This corresponds to condition \eqref{eq:interex} when $\gamma=\mu/2$ and $\ell=\eta/2$. In particular, the solution to \eqref{eq:covariant_wave_equation*} blows up in finite time for any $p>1$ if $\alpha+\gamma+\ell\leq 0$; whereas, if $\alpha+\gamma+\ell>0$ then condition \eqref{eq:interaction} holds for any $1<p<p_\ell(\alpha+\gamma)$.

We can conclude that Theorem \ref{th.main} introduces the important novelty of our new space derivative \eqref{cov_space} not considered in \cite{CGL2021}. But also when $\eta=0$ Theorem \ref{th.main} is stronger then the result in \cite{CGL2021} because it allows to consider a time derivative which leads to a \textit{non}-scale-invariant operator.

A similar result holds if $\displaystyle{\lim_{\xi\to +\infty}\frac{U(\xi)}{\log(\xi)}}=\ell<0$; indeed, the coefficient in the logarithmic term does not change: by the computation
\begin{align*}
\frac{1}{p} &\min_{[\sigma_n t,(2+\sigma_n)t]} U(\xi) -\max_{[\sigma_n t,(2+\sigma_n)t]} U(\xi) \\
&\hspace{4em} \geq  \left(\frac{\ell-\eps}{p}-\ell-\eps\right)\log(t)- \log\left(\sigma_n^{\ell+\varepsilon}\right)+\log\left((2+\sigma_n)^\frac{\ell-\varepsilon}{p}\right),
    \end{align*}
we arrive at \eqref{eq:interaction_bounded}.

It is worth noticing that if $\ell=0,$ we reduce to the case of Example \ref{ex:3}.

\end{example}
\begin{example} \label{ex:5}
    If $\displaystyle{\lim_{\xi \to+\infty}}\frac{U(\xi)}{\log(\xi)}= -\infty,$ then for any $M>0$ and $\xi>0$ sufficiently large, we may estimate $U(\xi)<-M \log(\xi)$, and then
\begin{align*}
\Big(\frac{2}{p-1}-\alpha\Big)\log(t)&-\max_{[0,t]}G(\xi)
    +\frac{1}{p} \min_{[\sigma_n t,(2+\sigma_n)t]} U(\xi) -\max_{[\sigma_n t,(2+\sigma_n)t]} U(\xi) \\
 \geq&
 \Big(\frac{2}{p-1}-\alpha\Big)\log(t)-\max_{[0,t]}G(\xi)+M\left(1-\frac{1}{p}\right) \log(\sigma_n t)\\
 \geq &\Big(\frac{2}{p-1}-\alpha+M\left(1-\frac{1}{p}\right)\Big)\log(t)-\max_{[0,t]}G(\xi)+M\left(1-\frac{1}{p}\right) \log(\sigma_n).
\end{align*}
Assuming in addition $A_0\geq 0$, if $A_0\in L^1([0,\infty))$ or $\displaystyle{\lim_{s\to +\infty}}A_0(s)s=\gamma\geq 0,$ then a blow up result occurs for any $p>1$. 
\end{example}

\begin{remark}
If $\displaystyle{\lim_{\xi \to+\infty}}\frac{U(\xi)}{\log(\xi)}= +\infty,$ we can notice that condition \eqref{eq:interaction} can never be satisfied. Indeed, for any $K>0$ and $t$ sufficiently large, we may estimate
\begin{align*}
\Big(\frac{2}{p-1}-\alpha\Big)\log(t)&-\max_{[0,t]}G(\xi)
    +\frac{1}{p} \min_{[\sigma_n t,(2+\sigma_n)t]} U(\xi) -\max_{[\sigma_n t,(2+\sigma_n)t]} U(\xi) \\ 
\leq &
 \Big(\frac{2}{p-1}-\alpha\Big)\log(t)-\max_{[0,t]}G(\xi)+\left(\frac{1}{p}-1\right)  \max_{[\sigma_n t,(2+\sigma_n)t]} U(\xi)\\
 \leq&
 \Big(\frac{2}{p-1}-\alpha\Big)\log(t)-\max_{[0,t]}G(\xi)+\left(\frac{1}{p}-1\right)  K \log((2+\sigma_n)t)\\
 \leq &\Big(\frac{2}{p-1}-\alpha+\frac{K}{p}-K\Big)\log(t)-\max_{[0,t]}G(\xi)+\left(\frac{1}{p}-1\right)  K \log(2+\sigma_n),
\end{align*}
then for all $p>1$ we can choose $K>0$ sufficiently large such that 
$\left(\frac{2}{p-1}-\alpha+\frac{K}{p}-K\right)<0.$ Being $\displaystyle{\max_{[0,t]} G(\xi)}\geq 0$ 
the left hand side is bounded from above preventing \eqref{eq:interaction}.
\end{remark}

\subsection{{ Proof of Theorem \ref{th.main}}}
Due to the radial assumptions, for $j=0$ we rewrite \eqref{eq:wave_equation*} as
\begin{equation}
\label{eq:radial_problem}
\begin{cases}
u_{tt}-u_{rr}-\frac{n-1}{r}u_r=H(t,r,u), \quad &t\geq0,\,r>0,\\
u(0,r)=0,  \\
u_t(0,r)=\varepsilon e^{U(r)} v_1(r),
\end{cases}
\end{equation}
where
\begin{equation} 
\label{eq:H}
H(t,r,u)=h(t,r)u+e^{G(t)} e^{U(r)}F\Big(e^{-G(t)} e^{-U(r)}u\Big).
\end{equation}
Focusing on the initial data in \eqref{eq:radial_problem}, we understand the interaction between the space derivatives and the initial data. Similarly, from \eqref{eq:H} we recognize the interaction between the nonlinear term and the time and space derivatives.

Coming back to our problem, let us consider $H$ defined by \eqref{eq:H}. Since $h, F\geq 0,$ then $H(t,r,s)\geq 0,$ for any $s\geq 0.$ Applying Lemma \ref{lem:comparison} this gives $u(t,r)\geq 0,$ provided \eqref{eq:r-t>} holds. In turn,  this gives \begin{equation}\label{eq:H_positive}
H(t,r,u(t,r))\geq 0, \quad \text{if } r-t\geq \sigma_n t.
\end{equation}
\\
For small fixed $\delta>0$, we define the following blow-up set
\begin{equation}
\label{eq:sigma}
\Sigma_\delta =\Big\{(t,r)\in(0,\infty)^2: r-t\geq \max\left\{\sigma_n t,\delta\right\}\Big\},
\end{equation}
where $\sigma_n$ is the constant given in Lemma \ref{lemma:Tdeltam}; it depends on the space dimension, 
in particular from the different representations of the  solution to the  free wave equation in odd and even space dimension.

Let $u^0=u^0(t,x)$ be the solution to the linear Cauchy problem \eqref{eq:linear_radial} with initial data $\psi(r)=u_1(r)$.
Due to \eqref{eq:H_positive}, we can use formulas \eqref{eq:um} and \eqref{eq:um0}. Since \eqref{eq:data_assumption} holds, 
we may estimate 
\begin{equation*}
u(t,r)\geq \epsilon u^0(t,r)\geq \frac{\epsilon}{8r^m}\int_{r-t}^{r+t}\lambda^m e^{U(\lambda)}v_1(\lambda)d\lambda\geq \frac{M\epsilon}{8r^m}\int_{r-t}^{r+t}\frac{\lambda^m e^{U(\lambda)}}{(1+\lambda)^{\alpha+1}}d\lambda,\,
\end{equation*}
for any $(t,r)\in\Sigma_\delta.$
Being $r-t>\delta,$ we derive
\begin{align*}
u(t,r)&\geq 
\frac{M\e}{8r^m}\left(\frac{\delta}{1+\delta}\right)^{\alpha+1}\int_{r-t}^{r+t}\lambda^{m-(\alpha+1)} e^{U(\lambda)}d\lambda.
\end{align*}
We put
\begin{equation}
\label{eq:U_min_max}
    \begin{aligned}
\bar U(t,r)&:=\min\left\{U(\xi): r-t\leq\xi\leq r+t \right\},\\
\bbar U(t,r)&:=\max\left\{U(\xi): r-t\leq\xi\leq r+t \right\}.
\end{aligned}
\end{equation}
Hence, we get
\begin{align*}
u(t,r)&\geq \frac{M \epsilon}{8r^m}\left( \frac{\delta}{1+\delta}\right)^{\alpha+1}(r+t)^{-(\alpha+1)} e^{\bar{U}(t,r)}\int_{r-t}^{r+t}\lambda^{m} d\lambda \\
& \geq \frac{M\e}{8}\left( \frac{\delta}{1+\delta}\right)^{\alpha+1}
\frac{2t(r-t)^{m} e^{\bar U(t,r)}}{r^m(r+t)^{\alpha+1}}.
\end{align*}
Thus,  we conclude
\begin{equation}\label{eq:estimate_from_below}
u(t,r)\geq \frac{C_0t^{m+1}{e^{\bar U(t,r)}}}{r^m(r+t)^{\alpha+1}}\,,
\end{equation}
where 
\begin{equation}\label{eq:Czero}
C_0=\e\frac{\sigma_n^m M}{4}\left(\frac{\delta}{1+\delta}\right)^{\alpha+1}>0.
\end{equation}

Let us refine \eqref{eq:estimate_from_below} by using \eqref{eq:F_up} and an iterative argument. Let us suppose that the solution $u$ to \eqref{eq:radial_problem} satisfies an estimate of the form 
\begin{equation}
\label{eq:step0}
u(t,r)\geq \frac{Ct^a e^{d\,\bar U(t,\lambda)}} {r^m(r+t)^b}, \quad \text{for any } (t,r)\in \Sigma_\delta,
\end{equation}
where $a$, $b$, $d$ and $C$ are positive constant. In particular, \eqref{eq:step0} holds for $a=m+1$, $b=\alpha+1$, $d=1$ and $C=C_0$. \\
Since the initial data $u_1$ is positive, from \eqref{eq:um0} we know that the solution $u^0$ to the linear problem associated to \eqref{eq:radial_problem} is positive; we deduce that the solution $u$ to \eqref{eq:radial_problem} also satisfies 
\begin{equation*}
u(t,r)\geq \frac{1}{8r^m}\int_0^t d\tau \int_{r-t+\tau}^{r+t-\tau}\lambda^m H(\tau,\lambda, u(\tau,\lambda))d\lambda.
\end{equation*}
Recalling $h(t,r)\geq0$ and \eqref{eq:H_positive}, we have 
\[ H(\tau,\lambda, u(\tau,\lambda))\geq e^{G(\tau)}e^{U(\lambda)}F\Big(e^{-G(\tau)} e^{-U(\lambda)}u(\tau,\lambda)\Big)\]
for any $(\tau, \lambda)\in \Sigma_\delta$. Since $F(s)\geq |s|^p$ and estimate \eqref{eq:step0} holds, we derive 
\[ H(\tau,\lambda, u)\geq \frac{C^p\tau^{pa}e^{pd\bar U(\tau,\lambda)}}{\lambda^{mp}(\lambda + \tau)^{pb}}e^{-(p-1)G(\tau)}e^{-(p-1)U(\lambda)}.\]
We may estimate
\begin{equation}
\label{eq:mainstepproof}
\begin{aligned}
u(t,r)&\geq \frac{C^p}{8r^m}\int_0^t d\tau \int_{r-t+\tau}^{r+t-\tau}\frac{\tau^{pa}(e^{pd \bar U(\tau,\lambda)})}{\lambda^{m(p-1)}(\lambda+\tau)^{pb}}e^{-(p-1)G(\tau)}e^{-(p-1)U(\lambda)} d\lambda\\
&\geq \frac{C^p}{8r^m}e^{pd \bar U(r,t)}e^{-(p-1)\bbar U(t,r)}\int_0^t \tau^{pa} e^{-(p-1)G(\tau)}  d\tau \int_{r-t+\tau}^{r+t-\tau}\frac{1}{\lambda^{m(p-1)}(\lambda+\tau)^{pb}}  d\lambda.\\
\end{aligned}
\end{equation}
We put \begin{equation}
\bbar{G}(t):=\max\left\{G(\xi): 0\leq\xi\leq t\right\};
\end{equation}
then, we can write 
\begin{equation}
\begin{aligned}
u(t,r)& \geq\frac{C^p}{8r^m(r+t)^{pb+m(p-1)}}e^{pd \bar U(r,t)}e^{-(p-1)\bbar U(t,r)} e^{-(p-1)\bbar{G}(t)}\int_0^t \tau^{pa}d\tau \int_{r-t+\tau}^{r+t-\tau}d\lambda.
\end{aligned}
\end{equation}
Applying integration by parts, we easily obtain 
\begin{equation*}
\int_0^t (t-\tau)\tau^{pa}d\tau
\geq 
\frac{t^{pa+2}}{(pa+1)(pa+2)}.
\end{equation*}

Let $(t,r)\in \Sigma_\delta$, from \eqref{eq:step0} and \eqref{eq:mainstepproof} we can conclude
\begin{equation}
\label{eq:step1}
u(t,r)\geq \frac{C^*t^{a^*}}{r^m(r+t)^{b^*}}e^{-\ell^*\bbar{G}(t)} e^{d^*\bar{U}(t,r)} e^{-\ell^* \bbar{U}(t,r)}, \quad \text{for any }(t,r)\in\Sigma_\delta,
\end{equation}
with 
\begin{equation*}
a^*=pa+2, \hspace{2em} b^*=pb+m(p-1), \hspace{2em} d^*= pd, \hspace{2em} \ell^*=(p-1),  \hspace{2em} C^*=\frac{C^p}{8(pa+2)^2}.
\end{equation*}
Having in mind to apply an iteration procedure, we apply a third step in order to better understand the influence of the exponential terms.

Since $u$ satisfies \eqref{eq:step1} we can estimate
\begin{align*}
H(\tau,\lambda, u)&\geq \frac{C^p\tau^{pa^*}}{\lambda^{mp}(\lambda+\tau)^{pb^*}}e^{-p\ell^*\bbar G(\tau)} e^{pd^*\bar{U}(t,r)}e^{-p\ell^* \bbar U(\tau,\lambda)}e^{-(p-1)\bbar G(\tau)} e^{-(p-1)U(\lambda)}\\
& \geq \frac{C^p\tau^{pa^*}}{\lambda^{mp}(\lambda+\tau)^{pb^*}}e^{-(p\ell^*+p-1)\bbar G(\tau)}e^{pd^*\bar{U}(t,r)}e^{-(p\ell^*+p-1)\bbar U(\tau,\lambda)},
\end{align*}
for any $(\tau, \lambda)\in \Sigma_\delta$.
Then, similarly to the previous step, we can derive
\begin{align}
\label{eq:step2}
u(t,r)\geq \frac{C_1^*t^{pa^*+2}}{r^m(r+t)^{pb^*+m(p-1)}}e^{-(p\ell^*+p-1)\bbar G(t)}e^{pd^*\bar U(t,r)}e^{-(p\ell^*+p-1) \bbar U(t, r)},\quad  
\end{align}
for any $(t,r)\in\Sigma_\delta$,
where $C^*_1:=(C^{*})^p/(8(pa^*+2)^2)$.

Let us define the sequences $\{a_k\},\,\{b_k\},\,\{d_k\},\,\{\ell_k\}, \,\{C_k\}$ for $k\in \N$ by 
\begin{equation}\label{eq:sequences}
\begin{array}{ll}
a_{k+1}=pa_k+2, & a_1=m+1,\\
b_{k+1}=pb_k+m(p-1),  & b_1=\alpha+1,\\
d_{k+1}=pd_k, & d_1= 1,\\
 \ell_{k+1}=p\ell_k+p-1, & \ell_1= 0,\\
C_{k+1}=\frac{(C_k/2)^p}{2(pa_k+2)^2}, & C_1=C_0,
\end{array}
\end{equation}
where $C_0$ is defined by \eqref{eq:Czero}. Hence, we have
\begin{align}
&a_{k+1}=p^k\left(m+1+\frac{2}{p-1}\right)-\frac{2}{p-1},\label{eq:sequ1}\\
&b_{k+1}=p^k(\alpha+1+m)-m,\label{eq:sequ2} \\
& d_{k+1}=p^{k-1}, \\
& \ell_{k+1}= p^k-1, \\
& \label{eq:Ck+1first} C_{k+1}\geq K\frac{C_k^p}{p^{2k}},
\end{align}
for some constant $K=K(p,m)>0$ independent of $k$.

Following the same procedure as in \eqref{eq:step0}, \eqref{eq:step1} and \eqref{eq:step2} one can prove for any $k\geq 1$
\begin{equation}
\label{eq:generalestimate}
u(t,r)\geq \frac{C_kt^{a_{k}}}{r^m(r+t)^{b_k}}e^{-\ell_k \bbar G(t)}e^{d_k\bar{U}(t,r)}e^{-\ell_k \bbar U(t,r)}, \quad \text{for any }(t,r)\in\Sigma_\delta,
\end{equation}

The relation \eqref{eq:Ck+1first} implies  that for any $k\geq 1$ it holds
\begin{align}
&C_{k+1}\geq \exp\left(p^{k}\left(\log(C_0)-S_p(k)\right)\right),
\label{eq:sequ3}\\
&S_p(k)=\displaystyle{\Sigma_{j=0}^k} \sigma_j,\\& \sigma_0=0 \text{ and }  \sigma_j=\frac{j\log(p^2)-\log K}{p^{j}} \text{ for } j\geq 1.
\end{align}
We note that $\sigma_j>0$ for sufficiently large $j$, and $S_p(k)\to S_{p,K}$ for $k\to +\infty$ for some positive constant $S_{p,K}$. Thus, 
\begin{equation}
\label{eq:Ck+1}
C_{k+1}\geq \exp(p^{k}(\log (C_0)-S_{p,K})),
\end{equation}
for sufficiently large $k$.
Therefore, by \eqref{eq:generalestimate} we obtain 
\begin{equation}
\label{eq:finaluestimate}
u(t,r)\geq \frac{(r+t)^m e^{\bbar G(t)} e^{\bbar U(t,r)}}{r^m t^{\frac{2}{p-1}}}\exp(p^k J(t,r)),
\end{equation}
where 
\begin{align*}
J(t,r):=&\log(C_0)-S_{p,K}+\Big(m+1+\frac{2}{p-1}\Big)\log t-(\alpha+1+m)\log(r+t)\\
&
+\frac{1}{p}\bar U(t,r)-\bbar G(t)-\bbar U(t,r).
\end{align*}
Thus, if we prove that there exists $(t_0,r_0)\in \Sigma_\delta $ such that $J(t_0,r_0)>0$, then we can conclude that the solution to \eqref{eq:radial_problem} blows up in finite time, in fact
\[ 
u(t_0,r_0)\to \infty  \text{ for } k\to \infty. 
\]
It suffices to restrict on the line $r=r(t)=t\left(1+\sigma_n\right)$. Thus, we look for a suitable $t_0>0$ such that $J(t_0, r(t_0))>0;$ this is equivalent to 
\begin{equation}
\begin{aligned}
\label{eq:final-inequality}
   &\left(\frac{2}{p-1}-\alpha\right)\log(t)-\bbar{G}(t)+\frac{1}{p} \bar U(t,t(1+\sigma_n))- \bbar U(t,t(1+\sigma_n))\\  &\hspace{18em}>\log\left(\frac{e^{S_{p,K}}}{C_0}\left(2+\sigma_n\right)^{\alpha+1+m}\right).
\end{aligned}
\end{equation}
where $\bar U$ and $\bbar U$ are defined in \eqref{eq:U_min_max}.
Since condition \eqref{eq:interaction} holds, then inequality \eqref{eq:final-inequality} is satisfied for $t$ sufficiently large. This ensures that the solution to \eqref{eq:covariant_wave_equation*} (with $j=0$) blows up in finite time.

\begin{proofof}{Theorem \ref{th.main.derivative}}
Due to the radial assumptions, for $j=1$ we rewrite \eqref{eq:wave_equation*} as
\begin{equation}
\begin{cases}
u_{tt}-u_{rr}-\frac{n-1}{r}u_r=H(t,r,u, \partial_t u), \quad &t\geq0,\,r>0,\\
u(0,r)=0,  \\
u_t(0,r)=\varepsilon e^{U(r)} v_1(r),
\end{cases}
\end{equation}
where
\begin{equation} 
H(t,r,u,\partial_t u)=h(t,r)u+e^{G(t)} e^{U(r)}F\Big(e^{-G(t)} e^{-U(r)}\partial_t u\Big).
\end{equation}
For a small fixed $\delta>0$, we define
\begin{equation}
\label{eq:sigma_derivative}
\Sigma_\delta =\Big\{(t,r)\in(0,\infty)^2: r-t\geq \max\left\{\sigma_n t,\delta\right\}\Big\},
\end{equation}
where $\sigma_n$ is the constant given in Lemma \ref{lemma:Tdeltam}.

We follow straightforward calculations as in the proof of Theorem \ref{th.main} to prove that the solution $u$ to \eqref{eq:covariant_wave_equation*} with $j=1$ satisfies 
\begin{equation}
\label{eq:step0_derivative}
u(t,r)\geq \frac{Ct^a e^{d\,\bar U(t,\lambda)}} {r^m(r+t)^b}, \quad \text{for any } (t,r)\in \Sigma_\delta,
\end{equation}
where $a=m+1$, $b=\alpha+1$, $d=1$ and $C=C_0$ given by
\begin{equation}
C_0=\e\frac{\sigma_n^m M}{4}\left(\frac{\delta}{1+\delta}\right)^{\alpha+1}>0;
\end{equation}
moreover, 
\begin{equation}
    \begin{aligned}
\bar U(t,r):=\min\left\{U(\xi): r-t\leq\xi\leq r+t \right\}.\\
\end{aligned}
\end{equation} 
Now we can apply Lemma \ref{lemma:Tdeltam} to the solution of \eqref{eq:radial_problem} since from \eqref{eq:H_positive} we know $H\geq 0$ for any $(t,r)\in \Sigma_\delta$. Moreover, we can also apply Lemma \ref{lem:comparison} to the solution $u^0$ of \eqref{eq:linear_radial} with $\psi=\eps e^{U(r)}v_1(r)>0$. Hence,
\begin{equation*}
u(t,r)\geq \frac{1}{8r^m}\int_0^t d\tau \int_{r-t+\tau}^{r+t-\tau}\lambda^m H(\tau,\lambda, u(\tau,\lambda))d\lambda.
\end{equation*}
In turn this gives $u(t,r)\geq 0$ and, recalling $h(t,r)\geq0$
\[ H(\tau,\lambda, u, \partial_t u)\geq e^{-(p-1)G(\tau)}e^{-(p-1)U(\lambda)}|\partial_t u(\tau,\lambda)|^p, \]
and then 
\begin{equation*}
u(t,r)\geq \frac{1}{8r^m}\int_0^t d\tau \int_{r-t+\tau}^{r+t-\tau}\lambda^m e^{-(p-1)G(\tau)}e^{-(p-1)U(\lambda)}|\partial_t u(\tau,\lambda)|^p d\lambda.    
\end{equation*}
Exchanging the order of the integrals we get
\begin{equation}
\label{eq:step1_derivative}
\begin{aligned}
u(t,r)\geq \frac{1}{8r^m}\int_{r-t}^r d\lambda &\int_0^{\lambda-r+t} \lambda^m e^{-(p-1)G(\tau)}e^{-(p-1)U(\lambda)}|\partial_t u(\tau,\lambda)|^p d\tau \\
&+ \frac{1}{8r^m}\int_r^{r+t} d\lambda \int_0^{r+t-\lambda} \lambda^m e^{-(p-1)G(\tau)}e^{-(p-1)U(\lambda)}|\partial_t u(\tau,\lambda)|^p d\tau \\
& \hspace{-72pt} \geq \frac{1}{8r^m}\int_r^{r+t} d\lambda \int_0^{r+t-\lambda} \lambda^m e^{-(p-1)G(\tau)}e^{-(p-1)U(\lambda)}|\partial_t u(\tau,\lambda)|^p d\tau. 
\end{aligned}    
\end{equation}
Since $u(0,\lambda)=0$, applying H\"older inequality we may estimate 
\begin{align*}
|u(r+t-\lambda,\lambda)|^p \leq e^{(p-1)G(r+t-\lambda)}\int_0^{r+t-\lambda} e^{-(p-1)G(\tau)}|\partial_t u(\tau,\lambda)|^pd\tau.
\end{align*}
From \eqref{eq:step0_derivative} and \eqref{eq:step1_derivative} this allows to derive:
\begin{equation}
\begin{aligned}
u(t,r)&\geq \frac{1}{8r^m}\int_r^{r+t} \lambda^m e^{-(p-1)U(\lambda)}e^{-(p-1)G(r+t-\lambda)} |u(r+t-\lambda,\lambda)|^pd\lambda \\
&\geq   \frac{C^p}{8r^m}\int_r^{r+t} \lambda^m e^{-(p-1)U(\lambda)}e^{-(p-1)G(r+t-\lambda)}\frac{(r+t-\lambda)^{ap} e^{dp\,\bar U(r+t-\lambda,\lambda)}} {\lambda^{mp}(r+t)^{bp}}d\lambda.
\end{aligned}
\end{equation}
Let us define
\begin{equation}
\begin{aligned}
\bbar U(t,r)&:=\max\left\{U(\xi): r-t\leq\xi\leq r+t \right\},\\
\bbar{G}(t)&:=\max\left\{G(\xi): 0\leq\xi\leq t\right\}.
\end{aligned}   
\end{equation}
Being $\lambda\in [r,r+t]$ it is easy to note that $\bar U(r+t-\lambda,\lambda)>\bar U(t,r)$ and $U(\lambda)<\bbar U(t,r)$; moreover, it holds $G(r+t-\lambda)<\bbar{G}(t)$ and $\lambda^{-m(p-1)}>(r+t)^{-m(p-1)}$.
Thus, we get
\begin{equation}
\begin{aligned} 
\label{eq:step2_derivative}
u(t,r)&\geq \frac{C^p e^{-(p-1)\bbar{U}(t,r)}e^{dp \bar{U}(t,r)}e^{-(p-1)\bbar{G}(t)}} {8r^m(r+t)^{bp+m(p-1)}}\int_r^{r+t} (r+t-\lambda)^{ap}d\lambda\\
&= \frac{C^p e^{-(p-1)\bbar{U}(t,r)}e^{dp \bar{U}(t,r)}e^{-(p-1)\bbar{G}(t)}t^{ap+1}}{8(ap+1)r^m(r+t)^{bp+m(p-1)}}.
\end{aligned}
\end{equation}
We can conclude that for any  $(t,r)\in \Sigma_\delta$ the solution $u$ satisfies
\begin{equation}
u(t,r)\geq \frac{C^*t^{a^*}}{r^m(r+t)^{b^*}}e^{-\ell^*\bbar{G}(t)} e^{d^*\bar{U}(t,r)} e^{-\ell^* \bbar{U}(t,r)}, \quad \text{for any }(t,r)\in\Sigma_\delta,
\end{equation}
with 
\begin{equation*}
a^*=pa+1, \hspace{2em} b^*=pb+m(p-1), \hspace{2em} d^*= pd, \hspace{2em} \ell^*=(p-1),  \hspace{2em} C^*=\frac{C^p}{8(pa+1)}.
\end{equation*}
Similarly, by induction one can prove that for any $k\geq 1$ it holds
\begin{equation}
\label{eq:generalestimate_derivative}
u(t,r)\geq \frac{C_kt^{a_{k}}}{r^m(r+t)^{b_k}}e^{-\ell_k \bbar G(t)}e^{d_k\bar{U}(t,r)}e^{-\ell_k \bbar U(t,r)}, \quad \text{for any }(t,r)\in\Sigma_\delta,
\end{equation}
where
\begin{align}
&a_{k+1}=p^k\left(m+1+\frac{1}{p-1}\right)-\frac{1}{p-1},\\
&b_{k+1}=p^k(\alpha+1+m)-m, \\
& d_{k+1}=p^{k-1}, \\
& \ell_{k+1}= p^k-1, \\
& C_{k+1}\geq K\frac{C_k^p}{p^{2k}},
\end{align}
for some constant $K=K(p,m)>0$ independent of $k$.
Following straightforward calculations as in the proof of Theorem \ref{th.main}, as a consequence of  \eqref{eq:generalestimate_derivative}, one can prove that there exists a positive constant $\tilde S_{p,K}$ such that
\begin{equation}
\label{eq:finaluestimate_derivative}
u(t,r)\geq \frac{(r+t)^m e^{\bbar G(t)} e^{\bbar U(t,r)}}{r^m t^{\frac{1}{p-1}}}\exp(p^k \tilde J(t,r)),
\end{equation}
for any $(t,r)\in \Sigma_\delta$, where 
\begin{align*}
\tilde J(t,r):=&\log(C_0)-\tilde S_{p,K}+\Big(m+1+\frac{1}{p-1}\Big)\log t-(\alpha+1+m)\log(r+t)\\
&
+\frac{1}{p}\bar U(t,r)-\bbar G(t)-\bbar U(t,r).
\end{align*}
The desired blow up result follows if we prove the existence of $(t_0,r_0)\in \Sigma_\delta $ such that $\tilde J(t_0,r_0)>0$.
Restricting to the line $r=r(t)=t\left(1+\sigma_n\right)$ the condition $\tilde J(t,r)>0$ is equivalent to 
\begin{equation}
\begin{aligned}
\label{eq:final-inequality-derivative}
   &\left(\frac{1}{p-1}-\alpha\right)\log(t)-\bbar{G}(t)+\frac{1}{p} \bar U(t,t(1+\sigma_n))- \bbar U(t,t(1+\sigma_n))\\  &\hspace{18em}>\log\left(\frac{e^{S_{p,K}}}{C_0}\left(2+\sigma_n\right)^{\alpha+1+m}\right).
\end{aligned}
\end{equation}
Condition \eqref{eq:interaction_derivative} guarantees that inequality \eqref{eq:final-inequality-derivative} is satisfied for $t$ sufficiently large. In this case the solution $u$ to problem \eqref{eq:covariant_wave_equation*} (with $j=1$) blows up in finite time.
\end{proofof}

\section{Lifespan estimates for the solution}
\label{sec:lifespan}
We can estimate the lifespan only for certain specific choices of the functions $A$ and $A_0$ in \eqref{cov_space} and, respectively, \eqref{cov_time}. Following Section \ref{sec:examples} we give the following result whose assumptions correspond to Example \ref{ex:scaleinvariant}, \ref{ex:3} and \ref{ex:4}. 
\begin{theorem}\label{thm:lifespan}
Let $n\geq 2$ and $\sigma_n>0$ given by Lemma \ref{lemma:Tdeltam}. Let $v_1$ be a radial smooth function satisfying 
\begin{equation}
    v_1(r)\geq \frac{M}{(1+r)^{\alpha+1}}, \quad \forall r>0,
\end{equation}
for some $\alpha>-1$ and $M>0$. Let $h\geq 0$ and $p>1$. Let us consider the Cauchy problem
\begin{equation}
\label{eq:CP_lifespan}
\begin{cases}
\tilde\partial_{tt}v(t,x)- \tilde\Delta v(t,x)=h(t,x)v(t,x)+|\tilde\partial_t^j v(t,x)|^p, \quad (t,x)\in [0,\infty)\times \R^n, \\
v(0,x)=0,&\\
v_t(0,x)= \varepsilon v_1(x),&
\end{cases}
\end{equation}
where $\tilde \partial_t=\partial_t+A_0(t)$ and $\tilde\partial_{x_i}=\partial_{x_i}+\partial_{x_i}U$ with  $A_0\geq 0$ and $U$ smooth, radial function; assume that
\begin{equation}\label{asmp_thm_lifespan}
\lim_{s\to+\infty}sA_0(s)=\gamma \quad \text{ and } \quad \lim_{\xi\to+\infty}\frac{U(\xi)}{\log(\xi)}=\ell
\end{equation}
for some $\gamma,\ell \geq 0$. 
\\ Let $p_{\ell,j}(\alpha+\gamma)$ be the positive root of the identity
\begin{equation}
(\alpha+\gamma+\ell)p^2-(\alpha+\gamma+2\ell+2-j)p+\ell=0.
\end{equation}
If $\alpha+\gamma+\ell>0$ and $1<p<p_{\ell,j}(\alpha+\gamma)$ then the classical solution $v$ of \eqref{eq:CP_lifespan}
blows up in finite time.
The same holds true for any $p>1$ if $\alpha+\gamma+\ell\leq 0$. \\
In particular, there exists $\eps_0>0$ such that for any $0<\eps<\eps_0$ the finite lifespan of $v$ satisfies
\begin{equation*}
T_\eps\leq  C \eps^{-\left(\frac{2-j}{p-1}-\alpha-\gamma-\ell\left(1-\frac{1}{p}\right)\right)^{-1}}
\end{equation*}
for some constant $C>0$, independent of $\eps$.
\end{theorem}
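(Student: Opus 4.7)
The plan is to reduce Theorem \ref{thm:lifespan} to the general blow-up criteria already established in Theorem \ref{th.main} (case $j=0$) and Theorem \ref{th.main.derivative} (case $j=1$), by turning the asymptotic assumptions \eqref{asmp_thm_lifespan} on $A_0$ and $U$ into a quantitative version of condition \eqref{eq:interaction} (respectively \eqref{eq:interaction_derivative}), and then tracking the $\eps$-dependence through the iteration of the proof of Theorem \ref{th.main} to extract a lifespan bound.

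First I would translate \eqref{asmp_thm_lifespan} into explicit asymptotics. Since $A_0\geq 0$ the function $G$ is non-decreasing, so $\max_{[0,t]}G=G(t)$. For every $\eta>0$ the condition $sA_0(s)\to\gamma$ gives $(\gamma-\eta)/s\leq A_0(s)\leq(\gamma+\eta)/s$ for $s$ large, and integration yields
\begin{equation*}
(\gamma-\eta)\log t + O(1) \le G(t) \le (\gamma+\eta)\log t + O(1).
\end{equation*}
Analogously $U(\xi)=\ell\log\xi+o(\log\xi)$, so $\tfrac{1}{p}\min_{[\sigma_n t,(2+\sigma_n)t]}U - \max_{[\sigma_n t,(2+\sigma_n)t]}U$ is bounded below (up to a constant) by $\bigl[\tfrac{\ell-\eta}{p}-(\ell+\eta)\bigr]\log t$, exactly as in the computation of Example \ref{ex:4}. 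Substituting these three estimates into the bracket appearing in \eqref{eq:interaction}/\eqref{eq:interaction_derivative}, the leading logarithmic coefficient becomes
\begin{equation*}
\kappa_j(p)\;:=\;\frac{2-j}{p-1}-\alpha-\gamma-\ell\!\left(1-\frac{1}{p}\right),
\end{equation*}
up to an $O(\eta\log t)$ error that can be made arbitrarily small. A direct algebraic rearrangement shows that $\kappa_j(p)>0$ is equivalent to $(\alpha+\gamma+\ell)p^2-(\alpha+\gamma+2\ell+2-j)p+\ell<0$. When $\alpha+\gamma+\ell>0$ the quadratic has a unique root $p_{\ell,j}(\alpha+\gamma)>1$ and the inequality holds precisely for $1<p<p_{\ell,j}(\alpha+\gamma)$; when $\alpha+\gamma+\ell\leq 0$ the coefficient of $p^2$ is non-positive, so $\kappa_j(p)>0$ for every $p>1$. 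In either case Theorem \ref{th.main} or Theorem \ref{th.main.derivative} applies, proving the qualitative blow-up statement.

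For the quantitative lifespan bound I would revisit the final step of the proof of Theorem \ref{th.main}: blow up at $(t_0,r_0)\in\Sigma_\delta$ is obtained as soon as the function $J(t_0,r_0)$ (resp.\ $\tilde J(t_0,r_0)$) inside $\exp(p^k J)$ in \eqref{eq:finaluestimate}/\eqref{eq:finaluestimate_derivative} is positive. Restricting to the line $r=(1+\sigma_n)t$ and inserting the asymptotic bounds derived above, the positivity condition simplifies, for $t$ large, to
\begin{equation*}
\kappa_j(p)\log t + \log C_0 \;\geq\; K^*,
\end{equation*}
where $K^*$ is an absolute constant depending on $p,n,\sigma_n,\delta,\gamma,\ell$ but independent of $\eps$. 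Since $C_0=\eps\cdot c(\sigma_n,M,\delta,\alpha)$ by \eqref{eq:Czero}, one has $\log C_0=\log\eps+O(1)$, and the threshold time satisfies $\kappa_j(p)\log t \gtrsim -\log\eps$, hence
\begin{equation*}
T_\eps \;\leq\; C\,\eps^{-1/\kappa_j(p)} \;=\; C\,\eps^{-\bigl(\frac{2-j}{p-1}-\alpha-\gamma-\ell(1-\frac{1}{p})\bigr)^{-1}},
\end{equation*}
which is the claimed bound. There exists $\eps_0>0$ such that the above is meaningful for $\eps<\eps_0$ (one needs $t$ large enough for the asymptotic estimates on $G$ and $U$ to be valid).

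The main obstacle is a bookkeeping one: the two-sided approximations $G(t)\simeq\gamma\log t$ and $U(\xi)\simeq\ell\log\xi$ carry an arbitrarily small slack $\eta>0$, and one has to verify that this slack only affects the multiplicative constant $C$ in the lifespan, not the exponent $1/\kappa_j(p)$. This works because the $\eta$-contribution to the leading coefficient can be absorbed into the logarithmic remainder $\log(\text{const}/C_0)$, while the sum $S_{p,K}$ coming from the iteration \eqref{eq:sequ3} remains finite independently of $\eta$ and $\eps$, exactly as in the proof of Theorem \ref{th.main}. Once this is ensured, the analogous argument for $j=1$ is identical with $\kappa_1(p)$ in place of $\kappa_0(p)$, using the iteration \eqref{eq:generalestimate_derivative} and the function $\tilde J$.
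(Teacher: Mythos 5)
Your proposal is correct and follows essentially the same route as the paper: reduce to the final inequalities \eqref{eq:final-inequality} and \eqref{eq:final-inequality-derivative} from Theorems \ref{th.main} and \ref{th.main.derivative}, convert \eqref{asmp_thm_lifespan} into logarithmic asymptotics for $G$ and $U$ as in Examples \ref{ex:3}--\ref{ex:4}, identify the leading coefficient $\frac{2-j}{p-1}-\alpha-\gamma-\ell\left(1-\frac{1}{p}\right)$ and its equivalence with the quadratic condition on $p$, and extract the lifespan from $C_0=\eps\tilde C_0$. The only caveat --- that the $o(\log t)$ slack in the asymptotics of $G$ and $U$ is absorbed into the constant rather than the exponent --- is handled with the same level of informality in the paper itself, so there is no substantive divergence.
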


\noindent 

We firstly consider Example \ref{ex:scaleinvariant} in order to compare our result with the one obtained in \cite{CGL2021}. Here, we have $A_0(t)=\frac{\mu}{2(1+t)}$ and $U(x)=\frac{\eta}{2}\log\langle x \rangle$ that means $\gamma=\mu/2$ and $\ell=\eta/2$. We extend the result obtained in \cite{CGL2021} to the case $\eta\neq0.$ \\
We follow the proof of Theorem \ref{th.main} (if $j=0$) or Theorem \ref{th.main.derivative} (if $j=1$) until \eqref{eq:final-inequality} or, respectively, \eqref{eq:final-inequality-derivative}.  We rewrite \eqref{eq:Czero} as $C_0:=\varepsilon \tilde C_0$ where
\begin{equation*}
\tilde C_0=\frac{\sigma_n^m M}{4}\left(\frac{\delta}{1+\delta}\right)^{\alpha+1}.
\end{equation*}
The solution to \eqref{eq:CP_lifespan} blows up in finite time if there exists $t>0$ such that
\begin{equation}
\begin{aligned}
   &\left(\frac{2-j}{p-1}-\alpha\right)\log(t)-\frac{\mu}{2}\log(1+t)+ \frac{\eta}{2p}\log\langle \sigma_n t \rangle- \frac{\eta}{2}\log\langle (2+\sigma_n) t \rangle\\ &\hspace{18em}>\log\left(\frac{e^{S_{p,K}}}{\e \tilde C_0}(2+\sigma_n)^{\alpha+1+m}\right).
\end{aligned}
\end{equation}
For any $t>1,$ this inequality is satisfied if  
\begin{equation}
\left(\frac{2-j}{p-1}-\alpha-\frac{\mu}{2}-\frac{\eta}{2}\left(1-\frac{1}{p}\right)\right)\log(t)>\log\left(\frac{e^{S_{p,K}}}{\e \tilde C_0}2^\frac{\mu}{2}\sigma_n^{-\frac{\eta}{2p}}(2+\sigma_n)^{\frac{\eta}{2}+\alpha+1+m}\right).
\end{equation}
Focusing on the size of the initial data, we get
\begin{equation}
t>C\varepsilon^{-\left(\frac{2-j}{p-1}-\alpha-\frac{\mu}{2}-\frac{\eta}{2}\left(1-\frac{1}{p}\right)\right)^{-1}}.
\end{equation}
We need 
\[ \frac{2-j}{p-1}-\alpha-\frac{\mu}{2}-\frac{\eta}{2}\left(1-\frac{1}{p}\right)>0.\]
In particular, this is satisfied if $p_{\ell,j}\left(\alpha+\frac{\mu}{2}\right)$ is the positive root of the identity
\begin{equation}
(\alpha+\eta/2+\mu/2)p^2-(\alpha+\mu/2+2-j+\eta)p+\eta/2=0,
\end{equation}
for $\alpha+\eta/2+\mu/2>0$ and $1<p<p_{\frac{\eta}{2},j}(\alpha+\mu/2)$, or $p>1$ and $\alpha+\eta/2+\mu/2<0$ then the classical solution $v$ of \eqref{eq:CP_lifespan}
blows up in finite time.

In particular, $C>0$ is independent of $\eps$ indeed
\begin{align}
C=\left(\left(\frac{1+\delta}{\delta}\right)^{\alpha+1}\frac{4 e^{S_{p,K}}}{M}2^\frac{\mu}{2}\sigma_n^{-(m+\frac{\eta}{2p})}(2+\sigma_n)^{\frac{\eta}{2}+\alpha+1+m}\right)^{\left(\frac{2-j}{p-1}-\alpha-\frac{\mu}{2}-\frac{\eta}{2}\left(1-\frac{1}{p}\right)\right)^{-1}}.
\end{align}

\begin{proof}
Now we prove Theorem \ref{thm:lifespan} that means we discuss Examples~\ref{ex:3} and\ref{ex:4}.\\ 
Due to \eqref{asmp_thm_lifespan} there exists $\bar t>0$ such that  we can write inequality \eqref{eq:final-inequality} (if $j=0$) or \eqref{eq:final-inequality-derivative} (if $j=1$) as
\begin{equation}
\left(\frac{2-j}{p-1}-\left(\alpha+\gamma\right)-\ell\left(1-\frac{1}{p}\right)\right)\log(t)>\log\left(\frac{\bar{C}}{\eps}\right),
\end{equation}
where $\bar{C}>0$ is independent of $\eps$ and $\bar t$. This is satisfied for
\begin{equation}
\label{eq:lifespan_ex4}
t>C\varepsilon^{-\left(\frac{2-j}{p-1}-\alpha-\gamma-\ell\left(1-\frac{1}{p}\right)\right)^{-1}}.
\end{equation}
The exponent of $\eps$ is negative provided $p>1$ if $\alpha+\gamma+\ell\leq 0$, or $1<p<p_{\ell,j}(\alpha+\gamma)$ if $\alpha+\gamma+\ell>0.$ Let us prove that $C>0$ is independent of $\varepsilon$ and $\bar t.$ 
For any $\eps_0>0$ arbitrarily small and $t>0$ sufficiently large, inequality \eqref{eq:final-inequality} holds if
\begin{equation}
\label{eq:final_inequality_lifespan_examples2-4}
\begin{aligned}
   &\left(\frac{2-j}{p-1}-\alpha-\gamma-\eps_0\right)\log(t)+\frac{1}{p} \min_{[\sigma_n t,(2+\sigma_n)t]}(\ell-\eps_0)\log(\xi)- \max_{[\sigma_n t,(2+\sigma_n)t]}(\ell+\eps_0)\log(\xi)\\ &\hspace{18em}>\log\left(\frac{e^{S_{p,K}}}{\eps \tilde C_0}\left(2+\sigma_n\right)^{\alpha+1+m}\right).
\end{aligned}
\end{equation}
%
%
%
%
%
%
%
%
%
%
Thus, for $\ell>0,$ in \eqref{eq:lifespan_ex4} we can write  
\begin{align}
C=\left(\left(\frac{1+\delta}{\delta}\right)^{\alpha+1}\frac{4 e^{S_{p,K}}}{M}\sigma_n^{-(m+\frac{\ell}{p})}(2+\sigma_n)^{\ell+\alpha+1+m}\right)^{\left(\frac{2-\red{j}}{p-1}-\alpha-\gamma-\ell\left(1-\frac{1}{p}\right)\right)^{-1}}.
\end{align}
Similarly, if $\ell<0$ from \eqref{eq:final_inequality_lifespan_examples2-4} we derive \eqref{eq:lifespan_ex4} with 
\begin{align}
C=\left(\left(\frac{1+\delta}{\delta}\right)^{\alpha+1}\frac{4 e^{S_{p,K}}}{M}\sigma_n^{-m+\ell}(2+\sigma_n)^{\alpha+1+m-\frac{\ell}{p}}\right)^{\left(\frac{2\red{-j}}{p-1}-\alpha-\gamma-\ell\left(1-\frac{1}{p}\right)\right)^{-1}}.
\end{align}
This gives us the statement.
\end{proof}

\section*{Acknowledgements} 
The first author has been partially supported by INdAM GNAMPA Project “Analysis and Control of Evolutionary Models with Nonlocal Phenomena'', Grant Code CUP E5324001950001 and by University of L'Aquila Ateneo Project “Leggi di conservazione con termini nonlocali e applicazioni al traffico veicolare''. The second and third authors have been partially supported by INdAM GNAMPA Project “Modelli locali e non-locali con perturbazioni non-lineari'', Grant Code CUP E55F22000270001. The third Author has been supported by PRIN 2022 “Anomalies in partial differential equations and applications” CUP H53C24000820006 and NextGenerationEU project CN00000013 -  CUP H93C22000450007.  

\end{document}